\newtheorem{theorem}{Theorem}[section]
\newtheorem{lemma}[theorem]{Lemma}
\newtheorem{prop}[theorem]{Proposition}
\newtheorem{coro}[theorem]{Corollary}
\newtheorem{remark}[theorem]{Remark}
\def\PP{\mathbb{P}}\def\AA{\mathbb{A}}\def\RR{\mathbb{R}}
\def\CC{\mathbb{C}}\def\HH{\mathbb{H}}\def\OO{\mathbb{O}}
\def\ZZ{\mathbb{Z}}\def\QQ{\mathbb{Q}}\def\LL{\mathbb{L}}
\def\JJ{\mathbb{J}} \def\XX{\mathbb{X}}\def\SS{\mathbb{S}}
\def\cA{{\mathcal A}}\def\cY{{\mathcal Y}}\def\cZ{{\mathcal Z}}
\def\cB{{\mathcal B}}
\def\cC{{\mathcal C}}
\def\cD{{\mathcal D}}
\def\cE{{\mathcal E}}
\def\cF{{\mathcal F}}
\def\cG{{\mathcal G}}
\def\cI{{\mathcal I}}
\def\cJ{{\mathcal J}}
\def\cL{{\mathcal L}}
\def\cQ{{\mathcal Q}}
\def\cM{{\mathcal M}}
\def\cO{{\mathcal O}}
\def\cS{{\mathcal S}}
\def\cU{{\mathcal U}}
\def\cX{{\mathcal X}}
\def\ra{\rightarrow}\def\lra{\longrightarrow}
\def\fg{{\mathfrak g}}\def\fh{{\mathfrak h}}
\def\fso{{\mathfrak{so}}}\def\ft{{\mathfrak{t}}}\def\fsl{{\mathfrak{sl}}}\def\fsp{{\mathfrak sp}}
\def\s{{\sigma}}\def\t{{\tau}}\def\iot{{\iota}}
\def\fn{{\mathfrak n}}\def\fe{{\mathfrak e}}\def\ff{{\mathfrak f}}
\def\fk{{\mathfrak k}}\def\fg{{\mathfrak g}}
\def\fj{{\mathfrak j}}\def\fp{{\mathfrak p}}\def\fc{{\mathfrak c}} 
\def\fz{{\mathfrak z}} 
\def\fs{{\mathfrak s}}
\def\fS{{\mathfrak S}}
\def\fD{{\mathfrak D}}
\DeclareMathOperator{\Ho}{H}
\DeclareMathOperator{\Aut}{Aut}
\DeclareMathOperator{\Hom}{Hom}
\DeclareMathOperator{\SL}{SL}
\DeclareMathOperator{\SU}{SU}
\DeclareMathOperator{\GL}{GL}
\DeclareMathOperator{\PGL}{PGL}
\DeclareMathOperator{\PSL}{PSL}
\DeclareMathOperator{\SO}{SO}
\DeclareMathOperator{\Spin}{Spin}
\DeclareMathOperator{\PSO}{PSO}
\DeclareMathOperator{\Sp}{Sp}
\DeclareMathOperator{\Stab}{Stab}
\DeclareMathOperator{\id}{id}
\DeclareMathOperator{\ad}{ad}
\DeclareMathOperator{\Ad}{Ad}
\DeclareMathOperator{\Fl}{Fl}
\DeclareMathOperator{\OGr}{OGr}
\DeclareMathOperator{\IG}{IG}
\DeclareMathOperator{\IFl}{IFl}
\DeclareMathOperator{\OG}{OG}
\DeclareMathOperator{\OFl}{OFl}
\DeclareMathOperator{\rank}{rank}
\DeclareMathOperator{\diag}{diag}
\DeclareMathOperator{\Hilb}{Hilb}
\DeclareMathOperator{\Lie}{Lie}
\DeclareMathOperator{\Out}{Out}
\DeclareMathOperator{\Sym}{Sym}
\DeclareMathOperator{\End}{End}
\DeclareMathOperator{\pt}{pt} 
\DeclareMathOperator{\corank}{corank}
\DeclareMathOperator{\Ker}{Ker}
\DeclareMathOperator{\codim}{codim}
\DeclareMathOperator{\codegree}{codegree}
\DeclareMathOperator{\tr}{tr}
\def\GG{\mathbb{G}}
\def\vlad#1{\textcolor{blue}{{\bf Vlad:} #1 {\bf }}}
\def\laurent#1{\textcolor{red}{{\bf *** Laurent:} #1 {\bf ***}}}
\begin{document}

\author{Vladimiro Benedetti}

\author{Laurent Manivel}

\title{Discriminants of  theta-representations}

\maketitle 

\begin{abstract}
Tevelev has given a remarkable  explicit formula for the discriminant of a complex simple Lie algebra, which can be 
defined as the equation of the dual hypersurface  of the minimal nilpotent orbit, or of the so-called adjoint variety.  
In this paper we extend this  
formula to the setting of graded Lie algebras, and express the equation of the corresponding dual hypersurfaces in terms 
of the reflections in the little Weyl groups, the associated complex reflection groups. This explains for example why the codegree of the 
Grassmannian $G(4,8)$ is equal to the number of roots of $\fe_7$. 
\end{abstract}

\section{Introduction}

Projective duality is a very classical construction in projective geometry that has been studied extensively in various contexts (see 
\cite{GKZ} and references therein), 
from the 19th century Pl\"ucker formulas for plane curves, to the exciting modern developments of homological projective duality
\cite{kuz}. 
Recall the classical definition: for  $X\subset \PP^r$ an embedded projective variety, the dual variety $X^\vee\subset \check{\PP}^r$ is 
the closure of the set of hyperplanes that are tangent to $X$ at some smooth point. This is a true duality in the sense that 
$(X^\vee)^\vee=X$, although the dual variety is in general a hypersurface. It is not always easy to decide when this is not the case, 
i.e. when the dual variety is degenerate. When the dual variety is a hypersurface, it is notoriously difficult to find an equation, 
or even to compute explicitly the degree of such an equation, which we call the \emph{codegree} of $X$ (another classical terminology 
for this number is the \emph{class} of $X$). 

The Katz-Kleiman formula \cite[Theorem 3.4 and Formula 3.19]{GKZ} provides an expression for the codegree in terms of the Chern classes of $X$ and 
the hyperplane class $\lambda$, namely 
$$\deg(X^\vee)=\int_X\frac{c(\Omega_X)}{(1-\lambda)^2},$$
with the convention that $\deg(X^\vee)=0$ exactly when the dual variety $X^\vee$ is not a hypersurface. (When the dual 
variety is degenerate, a variant of this formula allows to compute its dimension, and its genuine degree \cite{GK}). 
Although simple and beautiful, this formula is difficult to put in practice in general; even worse, it often expresses the 
codegree as an alternate sum of big integers, and it is not always easy even to decide whether the result is 
positive, or zero. 

The Katz-Kleiman formula was nevertheless applied to Grassmannians in  \cite{lascoux}, yielding an "explicit" formula given 
as a big sum of Vandermonde determinants with alternating signs.
Using this formula, Lascoux computed that 
$$\deg(G(3,9)^\vee)=120, \qquad \deg(G(4,8)^\vee)=126.$$
Of course, this leaves completely open the question of finding explicit equations for the dual varieties 
of Grassmannians (for $G(3,9)$ and $G(4,8)$, this is discussed in \cite{HO}). 

A class of varieties whose projective duals admit a remarkably explicit description is that of  adjoint varieties of simple 
complex Lie groups. For each such group $G$, with Lie algebra $\fg$, the adjoint variety is defined as the unique closed $G$-orbit
$X_{ad}(\fg)$ inside $\PP(\fg)$. Its dual variety is a hypersurface whose equation is often called the discriminant of $\fg$. 
An elegant explicit formula for this discriminant was given by  Tevelev \cite{tevelev}, which through the Chevalley restriction theorem
reduces to a simple combinatorial formula in terms of the root system. In particular, the degree of the discriminant is simply
given by the number of long roots. 

For the other homogeneous varieties, the situation is much less clear. 
The question of deciding which ones have degenerate duals was discussed
in \cite{KM}. A positive formula for the codegree was given in \cite{cw},
but it seems difficult to make it explicit. More recently, localization techniques 
in equivariant cohomology  were used in \cite{fnr} to obtain the degrees
of the discriminants as explicit but complicated sums of rational numbers. 
Typically, formula (5.1) in loc. cit. is 
$$\deg(G(k,n)^\vee)
=\frac{2k}{n+1}\sum_{S}\prod_{i\in S,j\notin S}\frac{\ell(S)+j-i}{i-j},$$
where the sum is over the sets $S$ of $k$ integers between $1$ and $n$, 
and $\ell(S)=\sum_{s\in S}s$. These techniques provide nevertheless no insight on the discriminants themselves. 

The reader may have noticed that the codegrees $120$ of $G(3,9)\subset\PP(\wedge^3\CC^9)$ and $126$ of  $G(4,8)\subset\PP(\wedge^4\CC^8)$ are familiar numbers in Lie theory: the latter is the number of 
roots of $E_7$, the former is the number of positive roots in $E_8$. It is a remarkable fact that these two exceptional Lie algebras admit the simple models 
$$\fe_7=\fsl_8\oplus \wedge^4\CC^8,
\qquad \fe_8=\fsl_9\oplus \wedge^3\CC^9\oplus \wedge^6\CC^9.$$
This suggests that Tevelev's formula for the discriminants could admit an
extension to the setting of graded Lie algebras, a topic which was developped with great success, in particular by the Russian school, 
starting from the late sixties, in parallel with the theory of Kac-Moody algebras \cite{kac, vinberg}. For $G(3,9)$ and $G(4,8)$ this connection was
already observed in \cite{HO}. Here we study the general situation and 
show how to extend Tevelev's formula in the graded setting, for $\fg=\bigoplus_{i\in\ZZ_m}\fg_i$ a cyclically graded simple Lie algebra. 
Equivalently, $\fg$ admits a Lie algebra automorphism $\theta$, acting on $\fg_i$ by multiplication by $\xi^i$ for some 
$m$-th root of unity $\xi$. Our main result is the following. 

\begin{theorem}
Suppose that $m=2,3$ or $5$, that  the $\theta$-corank is zero, and that $\fg_0$ is semisimple and acts irreducibly on $\fg_1$. 
Then the dual variety of the unique closed orbit in $\PP (\fg_1)$ has an explicit equation, given on a 
Cartan  subspace by a product of roots. 
\end{theorem}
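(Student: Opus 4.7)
The strategy is to adapt Tevelev's proof to the graded setting, replacing the classical Chevalley restriction with Vinberg's graded analogue. Under the hypotheses that $\fg_0$ is semisimple with irreducible action on $\fg_1$ and that the $\theta$-corank vanishes, the Chevalley--Vinberg restriction theorem identifies $\CC[\fg_1]^{G_0}$ with $\CC[\fc]^W$, where $\fc\subset\fg_1$ is a Cartan subspace and $W$ is the little Weyl group; moreover, a generic element of $\fg_1$ is $G_0$-conjugate to one lying in $\fc$. Since the closed orbit $X\subset\PP(\fg_1)$ is $G_0$-invariant, so is its dual hypersurface $X^\vee$, and the discriminant $\Delta_\theta$ is a $G_0$-semi-invariant polynomial, hence determined up to scalar by its $W$-invariant restriction $\Delta_\theta|_\fc$. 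The whole task reduces to showing that this restriction equals, up to scalar, a product of linear forms defining the reflecting hyperplanes of $W$ in $\fc$.

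The key geometric step is to translate the tangency condition for $X$ into a vanishing condition for a restricted root on $\fc$. Fix a $\theta$-stable Cartan subalgebra $\fh\supset\fc$ and use the graded root decomposition $\fg=\fh\oplus\bigoplus_\alpha \fg^\alpha$ to describe, via the graded Jacobson--Morozov theorem of Vinberg, the tangent space to $X$ at a highest weight vector in $\PP(\fg_1)$. Following Tevelev's argument in the ungraded case, tangency of a hyperplane $H\subset\fg_1$ containing a generic $v\in\fc$ should force the conormal direction to lie in the kernel of an explicit bracket with $v$; after $G_0$-conjugation this reads as the vanishing on $v$ of a single restricted root $\alpha$ associated with a reflection of $W$. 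Consequently $\Delta_\theta|_\fc$ vanishes on every reflecting hyperplane, and is divisible by the product $\prod_\alpha \alpha$ over the appropriate family of graded roots. A degree comparison, carried out by combining the Katz--Kleiman formula with the Hilbert series of $\CC[\fc]^W$ (equivalently, with the fundamental invariants of the complex reflection group $W$), would pin down the scalar constant and complete the identification.

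The main obstacle is the geometric step: one needs to verify that tangency of $H$ at the closed orbit corresponds \emph{exactly} to the vanishing of one restricted root, with no extra factors or multiplicities appearing. In the adjoint case Tevelev exploits the precise structure of the minimal nilpotent orbit and the root-string combinatorics of $\fg$; here one must use the analogous graded structure, which is tightly controlled only when the grading is ``short enough''. This is what pins down the restriction $m\in\{2,3,5\}$: these are the orders for which the relevant graded $\fsl_2$-triples and restricted root systems behave as in the adjoint case, so that the reflecting hyperplane arrangement of $W$ matches the graded root configuration in a rigid way. I would expect a mostly uniform argument based on Vinberg's classification of $\theta$-groups of zero corank with irreducible $\fg_0$-action, with the remaining book-keeping (which roots, and with what multiplicity) checked against the short finite list of cases arising for $m=2,3,5$.
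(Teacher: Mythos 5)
Your outline follows the right general philosophy (Vinberg's graded Chevalley restriction plus a Tevelev-style tangency analysis), but it has two genuine gaps, and the step you propose to close them with would fail. First, you never prove that the dual variety of the closed orbit is a hypersurface at all; this is not automatic (the paper exhibits gradings with $m=4,6$ where the dual of the closed orbit in $\PP(\fg_1)$ is degenerate), and your plan to settle it implicitly through a ``degree comparison via Katz--Kleiman'' is exactly the kind of computation the whole approach is meant to avoid: for these representations the Katz--Kleiman integral is an alternating sum that one cannot evaluate uniformly, so it cannot serve as the final pinning-down step. The paper's proof needs no degree comparison and no scalar normalization: one \emph{defines} $D=\prod_{u\in W_\theta(\alpha_{-1})}u$, observes that the corresponding hypersurface $\cD\subset\PP(\fg_{-1})$ is irreducible (it is the image of the hyperplane $\{\alpha_{-1}=0\}$ under $\fc_{-1}\to\fc_{-1}/W_\theta$), proves the inclusion $\PP(\fz_{-1}(e))\subset\cD$ of the conormal space (Vinberg: $T^\perp_{G_0e,e}=\fz_{-1}(e)$), and then shows by a dimension count that $\overline{G_0\PP(\fz_{-1}(e))}$ is a hypersurface; irreducibility of $\cD$ then forces equality. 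The dimension count is the step your sketch is missing: one takes the affine slice $\hat\fz=f+\alpha_{-1}^\perp$ with $f$ a regular nilpotent of $\fs_{[\alpha]}\cong\fsl_m$ lying in $\fz_{-1}(e)$, and shows its $G_0$-saturation has dimension $\dim\fg_{-1}-1$ because the normalizer of $\hat\fz$ in $\fg_0$ reduces to the centralizer of $f$ in the Cartan subalgebra $\fs_{[\alpha]}\cap\fg_0$, which is zero.

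Second, your key geometric step is stated in the wrong form and would not go through as written. The inclusion is not obtained by showing that tangency forces ``the vanishing on $v$ of a single restricted root''; rather, one takes an arbitrary $x\in\fz_{-1}(e)$, reduces to its semisimple part by Jordan decomposition, and proves that $\alpha_{-1}^\perp$ is a Cartan subspace of the graded reductive algebra $\fz(e)$ (this uses the nilpotency of $e$: otherwise one would get a full Cartan subalgebra commuting with $e$), so that $x_s$ can be conjugated into $\alpha_{-1}^\perp$ by the stabilizer of $e$ and $D(x)=0$ by invariance. Relatedly, your claim that $\Delta_\theta|_\fc$ should be ``a product of linear forms defining the reflecting hyperplanes of $W$'' is too strong: when the little Weyl group has two conjugacy classes of reflections (e.g.\ $W(B_n)$, $W(F_4)$, $W(G_2)$, the 5th Shephard--Todd group), only the class attached to long roots occurs in the equation of the dual of the closed orbit, and each hyperplane occurs with multiplicity $m$; a Hilbert-series bookkeeping over \emph{all} reflecting hyperplanes would give the wrong degree. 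Finally, note that identifying $G_0[e]$ (for $\alpha$ long) with the unique closed orbit is itself a separate argument in the paper (intersecting the adjoint variety $G[e]$ with $\PP(\fg_1)$ and showing the components are smooth $G_0$-orbits), which your proposal leaves implicit.
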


We refer to Section \ref{sec_main_theorem} for a more precise statement. The product of roots to be considered depends on the 
so-called little Weyl group of the graded 
Lie algebra, which is a complex reflection group; the roots in question define the reflection hyperplanes of the complex 
reflections in the Weyl group, at least on a Cartan subspace; the equation of the dual variety is then completely 
determined because of Vinberg's graded version of the Chevalley restriction theorem. The notion of $\theta$-corank is explained 
in Section \ref{sec_invariants} The restrictive hypothesis of the 
Theorem are justified in Section \ref{sec_counterexamples} For example, there are some natural gradings to consider for which $m=4$ or $m=6$, 
but for which the dual variety of the closed orbit in $\PP (\fg_1)$ is not a hypersurface.

\medskip

\paragraph{\bf{Outline of the paper.}} 
We start in Section \ref{sec_2} with some very classical preliminaries about complex simple Lie algebras, and we explain in 
Section \ref{sec_3} how they extend to the graded setting. Section \ref{sec_4} focuses on the associated Weyl groups, which in the 
graded setting are only complex reflection groups in general; we explain how to produce complex reflections 
explicitly. This allows to write down an equation of the dual variety in Section \ref{sec_5}, and prove the main Theorem. 
In Section \ref{sec_6} we discuss the relevant examples, either exceptional or classical. Finally, in the independent Section \ref{sec_spin_lagr_varieties} 
we briefly explain how to extend Lascoux's formula to Lagrangian Grassmannians and spinor varieties.

\medskip\noindent {\it Acknowledgements.} 
We acknowledge support from the ANR project FanoHK, grant
ANR-20-CE40-0023. The first author is partially supported by the EIPHI Graduate School (contract ANR-17-EURE-0002)

\section{Preliminaries on Lie algebras}
\label{sec_2}

\subsection{Simple Lie algebras}
Graded Lie algebras are generalizations of Lie algebras, so let us start with the latter and recall a few basic facts. 
If $\fg$ is a simple complex Lie algebra, a \emph{Cartan subalgebra} $\fh\subset \fg$  is a maximal abelian subspace  
made of semisimple elements. Since the elements of $\fh$ commute, they can be diagonalized simultaneously, and one gets a 
decomposition
\[
\fg=\fh \oplus \bigoplus_{\alpha\in R(\fg)}\fg_{\alpha},
\]
where $R(\fg)\subset \fh^\vee$ is the set of \emph{roots}, such that $[h,x]=\alpha(h)x$ for any $x\in \fg_\alpha$ and any $h\in\fh$. 
Each  \emph{root space} $\fg_\alpha$  is one-dimensional. 
\medskip

The linear span of $R(\fg)$ is $\fh^\vee$ and one can extract basis of $\fh^\vee$ from $R(\fg)$, made of \emph{simple roots}. 
Among other properties, if $\alpha_1,\ldots,\alpha_n$ is such a basis of simple roots, any root $\alpha\in R(\fg)$ can be written 
as $\alpha=\sum_i n^\alpha_i \alpha_i$, where the $n^\alpha_i$'s are integers, either all non-negative or  all non-positive. 
For any root $\alpha$, $-\alpha$ is also a root  and there exists $h_\alpha\in \fh$ such that $\fg_{-\alpha}\oplus \CC h_\alpha \oplus \fg_\alpha\cong \fsl_2$.

\subsection{Invariants} Given $\fh\subset \fg$ one defines the \emph{Weyl group} $W=W(R(\fg))$ as $N_G(\fh)/Z_G(\fh)$, where $N_G(\fh)$ is the normalizer of $\fh$ in $G$, and $Z_G(\fh)$ its  centralizer. This is a finite group $W$ generated  by reflections $s_\alpha$, $\alpha\in R(\fg)$, which fix the hyperplane orthogonal to $\alpha$ and send $\alpha$ to $-\alpha$ ($\fg$ and $\fg^\vee$ being identified via the Killing form). The Weyl group $W$ permutes the roots in $R(\fg)$, and acts transitively on the set of roots of a given length (in the simply laced case all roots have the same length, and by convention they will be considered as long;
otherwise there are two classes, long and short roots).  A celebrated result, usually referred to as  Chevalley's restriction theorem, is the following:
\begin{theorem}[Chevalley]
\label{thm_chevalley}
The natural restriction morphism gives an isomorphism of invariant rings
\[
\CC[\fg]^G=\CC[\fh]^W.
\]
Moreover $\CC[\fh]^W$ is a polynomial algebra.
\end{theorem}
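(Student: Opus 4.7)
The plan is to treat the isomorphism statement and the polynomiality statement independently, the first being a direct analysis of the restriction map and the second being a consequence of the Chevalley--Shephard--Todd theorem applied to the reflection group $W$.

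First I would verify that the restriction map $\rho: \CC[\fg]^G \to \CC[\fh]$, $f \mapsto f|_\fh$, lands in $\CC[\fh]^W$. This is immediate from the definition of $W$: an element $n \in N_G(\fh)$ acts on $\fh$ via $\Ad(n)$, its action on $\fh$ factors through $W = N_G(\fh)/Z_G(\fh)$, and $G$-invariance of $f$ forces $f|_\fh$ to be $W$-invariant. Next I would prove injectivity of $\rho$: every semisimple element of $\fg$ is $G$-conjugate to an element of $\fh$, and semisimple elements are Zariski dense in $\fg$ (by the Jordan decomposition, nilpotents are limits of semisimples through scaling and conjugation), so $G \cdot \fh$ is Zariski dense in $\fg$, and a polynomial vanishing on $G\cdot \fh$ is zero.

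The core of the argument is surjectivity, which I expect to be the main obstacle. Given $p \in \CC[\fh]^W$, the strategy is to define a candidate function $P$ on the open set $\fg^{rs}$ of regular semisimple elements by $P(x) = p(h)$ for any $h \in \fh$ conjugate to $x$; this is well defined because two elements of $\fh$ are $G$-conjugate precisely when they lie in the same $W$-orbit, and it is $G$-invariant by construction. The real work is to show that $P$ extends to a regular function on all of $\fg$. I would argue this by bounding the degree: taking $p$ homogeneous of degree $d$, the function $P$ satisfies $P(tx) = t^d P(x)$ and is a rational function on $\fg$ regular on the open dense set $\fg^{rs}$; normality of the affine space $\fg$ (which has no codimension one complement of $\fg^{rs}$ supporting a pole, since the discriminant locus is an irreducible hypersurface that does not obstruct regular extension once one shows boundedness along generic approach) forces $P$ to extend to a polynomial. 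Concretely I would realize this by embedding $p$ as a polynomial in the elementary symmetric functions of the eigenvalues of $\ad(x)$ or of $x$ acting in a faithful representation—these are manifestly $G$-invariant polynomials on $\fg$ whose restrictions to $\fh$ generate $\CC[\fh]^W$—and invoke surjectivity by showing the image contains a generating set; the density argument then closes the loop, since $\rho$ is injective with image containing a generating set of $\CC[\fh]^W$.

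Finally, the polynomiality of $\CC[\fh]^W$ is a direct application of the Chevalley--Shephard--Todd theorem: $W$ acts on $\fh$ as a finite group generated by the reflections $s_\alpha$ ($\alpha \in R(\fg)$), so its invariant ring is free over $\CC$ on $\rank(\fg)$ homogeneous generators. The main conceptual obstacle is really the surjectivity step, which requires combining the orbit-theoretic description of $G$-orbits of semisimple elements with an argument that the candidate extension is polynomial rather than merely rational; once this is in hand, both assertions of the theorem fall out.
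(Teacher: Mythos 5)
The paper does not actually prove this statement: it is the classical Chevalley restriction theorem, quoted as a known result, so there is no in-paper argument to compare yours against, and I can only assess your proposal on its own terms. Your skeleton --- restriction lands in $\CC[\fh]^W$ because the $N_G(\fh)$-action on $\fh$ factors through $W$; injectivity from density of $G\cdot\fh$ (every semisimple element is conjugate into $\fh$ and semisimple elements are dense); polynomiality of $\CC[\fh]^W$ from the Chevalley--Shephard--Todd theorem --- is the standard one, and those three steps are fine.

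The genuine gap is in surjectivity, in both of the routes you sketch. The ``concrete'' route fails as stated: it is not true that the elementary symmetric functions of the eigenvalues of $x$ in a faithful representation (nor of $\ad(x)$) restrict to generators of $\CC[\fh]^W$. Type $D_n$ is the classical counterexample: for $\fg=\fso_{2n}$ in its standard representation the characteristic polynomial coefficients restrict to symmetric functions of $x_1^2,\dots,x_n^2$, hence produce the square of the basic degree-$n$ invariant $x_1\cdots x_n$ but not that invariant itself, and traces of powers of $\ad(x)$ have the same defect; so the claim that the image of restriction ``contains a generating set'' is false in general. The standard argument instead uses trace functions $x\mapsto\tr\bigl(\pi(x)^k\bigr)$ for all finite-dimensional representations $\pi$, together with an induction on the dominance order of weights showing that the orbit sums $\sum_{w\in W}(w\lambda)(h)^k$ span $\CC[\fh]^W$. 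The ``abstract'' route is also not closed: the complement of the regular semisimple locus in $\fg$ is a hypersurface (the discriminant locus), so regularity of your function $P$ on that open set plus normality of $\fg$ does not force a regular extension --- a rational function may well have a pole along a codimension-one locus. The boundedness you mention parenthetically is exactly the missing content and is nowhere established; you would need to prove it (for instance via continuity of the eigenvalue data of $\ad(x)$ and the fact that $P$ depends only on the semisimple part of $x$), or else abandon this route in favour of the representation-theoretic one.
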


The second statement in the theorem holds, more  generally, exactly  for those finite subgroups of $\GL_n$ which are \emph{complex reflection groups}, i.e. the finite groups generated by complex reflections.

\subsection{Tevelev's duality formula} If $G$ denotes the adjoint group of the simple Lie algebra $\fg$, recall that 
the \emph{adjoint variety} of $\fg$ is the (unique) closed $G$-orbit inside $\PP(\fg)$. Tevelev \cite{tevelev} shows that the projective 
dual variety (inside $\PP(\fg^\vee)\cong \PP(\fg)$, which are isomorphic by the Killing form) of the adjoint variety is an irreducible hypersurface  $\cD_\fg$, usually called the {\emph discriminant} of $\fg$, and given by some $G$-invariant  polynomial $D_\fg\in \CC[\fg]^G$ (defined up to constant). By Chevalley's restriction theorem, $D_\fg$ is completely determined by its restriction 
to $\fh$, for which Tevelev provides a simple expression. 

\begin{theorem}[Tevelev]\label{tevelev}
The dual variety of the adjoint variety $X\subset \PP(\fg)$  is the hypersurface $\cD_\fg$ defined by the $G$-invariant 
polynomial $D_\fg$ such that 
\[
D_\fg|_{\fh}:=\prod_{\alpha\in R(\fg)_l}\alpha,
\]
where $R(\fg)_l$ is the set of long roots.
\end{theorem}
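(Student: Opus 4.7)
The strategy is to use Chevalley's restriction theorem (Theorem \ref{thm_chevalley}) to reduce the problem to the Cartan subalgebra. Since $D_\fg$ is $G$-invariant, it is entirely determined by its $W$-invariant restriction to $\fh$; and because $\cD_\fg$ is a hypersurface, $\cD_\fg \cap \fh$ is a $W$-invariant hypersurface in $\fh$. Under the Killing form identification $\fg \cong \fg^\vee$, an element $h \in \fh$ lies in $X^\vee$ precisely when the hyperplane $H_h = \{y \in \fg : K(h,y) = 0\}$ is tangent to $X = X_{ad}(\fg)$ at some smooth point.

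The main geometric input is to show that $\ker(\alpha) \subset X^\vee \cap \fh$ for every long root $\alpha$. The adjoint variety is the projectivization of the minimal nilpotent orbit, and all long root vectors $e_\alpha$ are $G$-conjugate to the highest-root vector $e_\theta$, so each $[e_\alpha]$ is a smooth point of $X$. The affine tangent space to the cone $\widehat{X}$ at $v = e_\alpha$ is $[\fg, e_\alpha]$. By invariance of $K$, we have $K(h, [\fg, e_\alpha]) = K([h, \fg], e_\alpha)$; since $K$ pairs $\fg_\beta$ with $e_\alpha$ non-trivially only when $\beta = -\alpha$, this reduces to a non-zero multiple of $\alpha(h)$. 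Consequently $H_h$ is tangent to $X$ at $[e_\alpha]$ if and only if $\alpha(h) = 0$, so $\ker(\alpha) \subset X^\vee \cap \fh$ and therefore $\prod_{\alpha \in R(\fg)_l} \alpha$ divides $D_\fg|_\fh$.

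To upgrade this divisibility to an equality up to scalar, one must (i) rule out that any short-root hyperplane $\ker(\beta)$ appears as a component of $X^\vee \cap \fh$, and (ii) compute the codegree of $X_{ad}$, i.e.\ the degree of $D_\fg$, and verify it equals $|R(\fg)_l|$. Part (i) follows formally from (ii): by $W$-invariance of $X^\vee \cap \fh$, either all short-root hyperplanes appear as components or none do, and their appearance would force $\deg D_\fg \geq |R(\fg)_l| + |R(\fg)_s|$, contradicting (ii). The main obstacle is therefore part (ii). I would attack this via the Katz-Kleiman formula $\deg X^\vee = \int_{X_{ad}} c(\Omega_{X_{ad}})/(1-\lambda)^2$, using the description of $X_{ad}$ as a rational homogeneous space $G/P$ with known Chern class formulas, or alternatively via equivariant localization at the $T$-fixed points; either method should yield the count $|R(\fg)_l|$. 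Once the degrees match, irreducibility of $X^\vee$ (as the closure of the image of the irreducible conormal variety) ensures $D_\fg|_\fh = c \prod_{\alpha \in R(\fg)_l} \alpha$ for some $c \in \CC^\times$, and the theorem follows.
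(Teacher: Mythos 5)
Your tangency computation is correct and gives the ``easy'' half of the statement: for $h\in\fh$ and a long root $\alpha$, the hyperplane $\{y\in\fg \mid K(h,y)=0\}$ ($K$ the Killing form) contains the affine tangent space $[\fg,e_\alpha]$ exactly when $\alpha(h)=0$, so every wall $\Ker(\alpha)$, $\alpha\in R(\fg)_l$, lies in $X^\vee\cap\fh$. But note a point you pass over: $\alpha$ and $-\alpha$ cut out the same hyperplane, so this argument only yields divisibility of $D_\fg|_{\fh}$ by the product of the \emph{positive} long roots, which has degree $|R(\fg)_l|/2$; to get the full product you must show the vanishing along each wall has order two, e.g.\ by the standard remark that an $s_\alpha$-invariant polynomial divisible by $\alpha$ is divisible by $\alpha^2$ (write $f=\alpha g$ and observe $g$ is $s_\alpha$-anti-invariant). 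This is easily repaired, but without it your degree comparison would not pin down $D_\fg|_\fh$.

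The genuine gap is your step (ii). You assert that Katz--Kleiman or equivariant localization ``should yield'' $\deg(X^\vee)=|R(\fg)_l|$, but this is never carried out, and it is precisely the hard quantitative content of the theorem: as recalled in the introduction, the Katz--Kleiman integral for such varieties is an alternating sum of large numbers with no evident uniform closed form, and the localization formulas of Feh\'er--N\'emethi--Rim\'anyi give complicated sums of rational numbers; in practice the identity ``codegree of the adjoint variety $=$ number of long roots'' is \emph{deduced from} Tevelev's formula, not proved independently. So as written your argument reduces the theorem to an unproven computation at least as hard as the theorem itself. Tevelev's actual proof (and its graded generalization in the proof of Theorem \ref{thm_main}) goes the other way and needs no codegree computation: define $\cD_\fg$ by the product of long roots, prove $X^\vee\subseteq\cD_\fg$ by taking a hyperplane tangent at $e$, i.e.\ an element $x\in\fz(e)$, passing to its semisimple part by the Jordan decomposition and conjugating it into $\Ker(\alpha)\subset\fh$; then prove that $X^\vee$ is a hypersurface by a dimension count on the affine slice $f+\Ker(\alpha)$ with $f$ a suitable regular nilpotent commuting with $\Ker(\alpha)$, and conclude by irreducibility of $\cD_\fg$. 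To complete your approach you would have to replace the appeal to Katz--Kleiman by exactly this kind of inclusion-plus-dimension argument, at which point you have essentially reproduced Tevelev's proof.
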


\begin{remark}
Tevelev's result is actually slightly more general. Indeed it shows that the dual variety of the orbit closure of the short root spaces inside $\PP(\fg)$ is also a hypersurface defined by a $G$-invariant polynomial $D_{\fg,s}$ whose restriction to $\fh$ is given by
\[
D_{\fg,s}|_{\fh}:=\prod_{\alpha\in R(\fg)_s}\alpha,
\]
where $R(\fg)_s$ is the set of short roots. Notice that in this case the orbit in question is not closed.
\end{remark}

The main goal of this paper is to extend  this statement  to the graded setting.

\section{Cyclically graded Lie algebras}
\label{sec_3}

For this section we follow essentially \cite{vinberg}. Let $m$ be a positive integer and $\ZZ_m$ denote the cyclic group of order $m$. 
A $\ZZ_m$-graded Lie algebra is a Lie algebra $\fg$ with a grading $\fg=\bigoplus_{i\in \ZZ_m}\fg_i$ such that $[\fg_i,\fg_j]\subset \fg_{i+j}$. To any such grading, one can associate an automorphism $\theta$ of order $m$ of $\fg$, acting on $\fg_i$  by multiplication by $\xi^i$, where 
$\xi$ is a primitive $m$-th root of unity. Conversely, the grading can of course be recovered from the automorphism.

The connected component $G_0$ of the $\theta$-invariant subgroup $G^\theta$ has Lie algebra $\fg_0$.  It acts on any $\fg_i$ since $[\fg_0,\fg_i]\subset \fg_i$. We will be mostly interested in the $G_0$-representation $\fg_1$, which is called a $\theta$-representation. 

\subsection{Inner gradings}
A first class of examples correspond to the case where $\theta$ is an inner automorphism of a simple Lie algebra $\fg$. Choose a root 
$\alpha_i$ in a basis $\alpha_1,\ldots,\alpha_n$ of simple roots in $R(\fg)$. Recall that  any root $\alpha\in R(\fg)$ can be decomposed as $\alpha=\sum_j n^\alpha_j\alpha_j$. If $m$ is the maximal possible value of $n^\alpha_i$, one  defines a $\ZZ_m$-grading on $\fg$ by letting 
\[
\fg_j:= \delta_{j,0}\fh \oplus \bigoplus_{\alpha\mid n^\alpha_i=j\; mod \;m}\fg_\alpha.
\]
In this case $\fg_0$ is semisimple,  thus uniquely defined by a Dynkin diagram which can be described as follows. Let $\Delta$ be the 
Dynkin diagram of $\fg$ (each node corresponding to a simple root), and $\hat{\Delta}$ the corresponding affine diagram. The Dynkin diagram of $\fg_0$ is then $\hat{\Delta}\setminus \{ \alpha_i \}$. Moreover (note that $\fh$ is a Cartan subalgebra of  $\fg_0$), $\alpha_i$ 
is a highest weight of the irreducible $G_0$-representation $\fg_1$.

\begin{remark}
This construction depends on the choice of a single node (or root) $\alpha_i$. Notice that a grading could also be defined by the choice of more than one node (with some non-negative multiplicities); however, in the following, we will be mainly interested in graded Lie algebras such that $\fg_0$ is semisimple, and those arise only when one chooses one single node (see \cite{vinberg}).
\end{remark}

\subsection{Kac's classification} Even when $\theta$ comes from an outer automorphism of $\fg$,  Kac \cite{kac} was able to show that there exists an extended Dynkin diagram (playing the role played by $\hat{\Delta}$ for inner automorphisms) from which one can recover $\fg_0$ and $\fg_1$ exactly as before. These extended Dynkin diagrams are given in \cite{vinberg}. They exist only for Lie algebras  admitting outer automorphisms and are of the following types: $A_n^{(2)}$, $D_n^{(2)}$, $E_6^{(2)}$, $D_4^{(3)}$, where the superscript in parenthesis denotes the order of $\theta$ modulo inner automorphisms. We include these diagrams just below, as well as the affine Dynkin diagrams that correspond
to inner automorphisms. In analogy to the inner automorphism case, each node plays the role of the root $\alpha_i$ above, and the index near the node plays the role of the corresponding integer $m$. 
\medskip

\begin{center}
$
A^{(1)}_{n-1}=\dynkin[extended, labels={1,1,1,1,1}]A{}, \
B^{(1)}_{n-1}=\dynkin[labels={1,1,2,,2,2,2}]B[1]{}, \
C^{(1)}_{n-1}=\dynkin[extended, labels={1,2,2,2,2,1}]C[1]{}, 
$
$
D^{(1)}_{n-1}=\dynkin[extended, labels={1,1,2,,,2,1,1}]D[1]{}, \qquad \
F^{(1)}_{4}=\dynkin[extended, labels={1,2,3,4,2}]F4, \qquad \
G^{(1)}_{2}=\dynkin[extended, labels={1,2,3}]G2, 
$
$
E^{(1)}_{6}=\dynkin[extended, labels={1,1,2,2,3,2,1}]E6, \
E^{(1)}_{7}=\dynkin[extended, labels={1,2,2,3,4,3,2,1}]E7, \
E^{(1)}_{8}=\dynkin[extended, labels={1,2,3,4,6,5,4,3,2}]E8, 
$
$
A^{(2)}_{2}=\dynkin[labels={4,2}]A[2]2, \
A^{(2)}_{2n-2}=\dynkin[labels={4,4,,4,4,4,2}]A[2]{even}, \
A^{(2)}_{2n-3}=\dynkin[labels={2,2,4,4,4,4,4,2}]A[2]{odd}, 
$
$
D^{(2)}_{n}=\dynkin[labels={2,2,2,2,2,2}]D[2]{}, \qquad \
E^{(2)}_{6}=\dynkin[labels={2,4,6,4,2}]E[2]{6}, \qquad  \
D^{(3)}_{4}=\dynkin[labels={3,6,3}]D[3]{4}.
$
\end{center}

\subsection{Invariants}
\label{sec_invariants}
Remarkably, Chevalley's restriction theorem can be extended to the graded setting. 

\medskip

\paragraph{\bf{Definition (Cartan subspace, little Weyl group).}} A maximal subspace $\fc\subset\fg_1$ containing only semisimple and commuting elements
of $\fg$ is called a \emph{Cartan subspace}. All Cartan subspaces are conjugate under $G_0$, hence of the same  dimension, called the 
$\theta$-$\rank $ of $\fg$ \cite{vinberg}. Letting $N_0(\fc):=\{g\in G_0 \mid g(\fc)=\fc\}$ and  $Z_0(\fc):=\{g\in G_0 \mid g|_{\fc}=\id_{\fc}\}$,
the Weyl group of the graded Lie algebra is defined as $W_\theta:=N_0(\fc)/Z_0(\fc)$. It is usually referred to as the \emph{little Weyl group} (see \cite[Section 7]{GRLY}) to distinguish it from the Weyl group of $\fg$.

\begin{theorem}[Vinberg]
\label{thm_chevalley_vinberg}
The natural restriction morphism gives an isomorphism of invariant rings
\[
\CC[\fg_1]^{G_0}=\CC[\fc]^{W_\theta}.
\]
Moreover $\CC[\fc]^{W_\theta}$ is a polynomial algebra.
\end{theorem}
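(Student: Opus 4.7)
The plan is to mimic the classical Chevalley argument, with $(\fc, W_\theta)$ playing the role of $(\fh, W)$. First, the restriction map $\rho\colon \CC[\fg_1]^{G_0}\to \CC[\fc]^{W_\theta}$ is well-defined: any $w\in W_\theta$ lifts to some $g\in N_0(\fc)\subset G_0$, so $(f|_\fc)(w\cdot x)=f(g\cdot x)=f(x)$ for every $G_0$-invariant $f$. The real content is then the bijectivity of $\rho$ together with the polynomial ring property for $\CC[\fc]^{W_\theta}$.

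For injectivity I would run a density argument. The key input is a graded Jordan decomposition: every $x\in \fg_1$ writes uniquely as $x_s+x_n$ with $x_s,x_n\in \fg_1$ commuting, $x_s$ semisimple in $\fg$ and $x_n$ nilpotent. One gets this by taking the usual Jordan components of $x$ in $\fg$ and observing that they must lie in $\fg_1$, since the Jordan decomposition is preserved by every Lie algebra automorphism, in particular by $\theta$, which acts as a scalar on $\fg_1$. Now any semisimple element of $\fg_1$ sits inside some Cartan subspace, and all Cartan subspaces are $G_0$-conjugate, so the semisimple locus is contained in $G_0\cdot\fc$. Semisimplicity is an open condition, hence $G_0\cdot\fc$ is dense in $\fg_1$ and $\ker\rho=0$.

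For surjectivity I would pass to the categorical quotients. The inclusion $\fc\hookrightarrow \fg_1$ induces a morphism $\varphi\colon \fc/W_\theta\to \fg_1/\!/G_0$. Reductivity of $G_0$ identifies closed points of the target with closed $G_0$-orbits, which by the graded Jordan decomposition are exactly the semisimple orbits. Each such orbit meets $\fc$ in a single $W_\theta$-orbit, since two elements of $\fc$ are $G_0$-conjugate if and only if they are $W_\theta$-conjugate (a normalizer argument applied to the common Cartan subspace). Hence $\varphi$ is bijective on closed points, so finite and birational onto its image; since $\fc/W_\theta$ is normal (being the quotient of a smooth affine variety by a finite group) and the target is irreducible, $\varphi$ is an isomorphism, which identifies the invariant rings.

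Finally, by the Chevalley-Shephard-Todd theorem, $\CC[\fc]^{W_\theta}$ is polynomial if and only if $W_\theta$ is generated by complex pseudo-reflections. I would produce such reflections by restricting to $\fc$ those elements of the normalizer of a $\theta$-stable Cartan subalgebra $\fh\supset \fc$ of $\fg$ whose induced action on $\fc$ fixes a hyperplane; combinatorially these are controlled by the restrictions to $\fc$ of the roots of $\fg$, grouped into $\theta$-orbits. The expected main obstacle is verifying that these pseudo-reflections actually generate $W_\theta$: unlike in the ungraded case this is not automatic, and a clean proof likely requires either a Kac-diagram by Kac-diagram analysis of the $\theta$-representations, or a more conceptual route via the polar representation criterion of Dadok-Kac, which would force smoothness of $\fg_1/\!/G_0$ and hence the desired polynomiality.
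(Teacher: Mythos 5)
This statement is not proved in the paper at all: Theorem \ref{thm_chevalley_vinberg} is quoted, with attribution, from Vinberg \cite{vinberg}, so there is no internal proof to compare your sketch with; what you are attempting is a reproof of Vinberg's theorem, and as it stands it has genuine gaps. The most serious concrete one is in your injectivity step: ``semisimplicity is an open condition, hence $G_0\cdot\fc$ is dense in $\fg_1$'' is false in general. The semisimple locus is not open (already not in the ungraded $\fsl_2$), and for a general $\ZZ_m$-graded $\fg$ -- and Theorem \ref{thm_chevalley_vinberg} is stated with no corank or irreducibility hypothesis -- the union of Cartan subspaces need not be dense: it is dense exactly when the generic $G_0$-orbit in $\fg_1$ is closed, which fails for many $\theta$-representations (simplest instance: a grading with $\fc=0$ but $\fg_1\neq 0$, e.g.\ the $\ZZ_3$-grading of $\fsl_2$ with $\fg_1=\CC e$). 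The standard repair is different: one shows $f(x)=f(x_s)$ for every invariant $f$, because the semisimple part $x_s$ lies in $\overline{G_0\cdot x}$ (destabilize $x_n$ by a one-parameter subgroup coming from a graded $\fsl_2$-triple inside $\fz(x_s)$); then invariants are determined by their values on semisimple elements, which are all $G_0$-conjugate into $\fc$.

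Your surjectivity and polynomiality steps also lean on unproved or misstated points. ``Two elements of $\fc$ are $G_0$-conjugate iff they are $W_\theta$-conjugate'' is itself a nontrivial theorem of Vinberg, not a one-line normalizer argument; bijectivity on closed points does not by itself give finiteness of $\fc/W_\theta\to\fg_1/\!\!/G_0$ (you need, e.g., that the null-fibre meets $\fc$ only in $0$, so that $\CC[\fc]$ is integral over the restricted invariants); and the Zariski-type conclusion requires normality of the \emph{target} $\fg_1/\!\!/G_0$ (which does hold, being an invariant ring of a polynomial ring), not of the source $\fc/W_\theta$ as you wrote. Finally, the heart of Vinberg's theorem is precisely the point you leave open: that $W_\theta$ is generated by complex reflections. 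The reduction via Chevalley--Shephard--Todd is the right frame, but producing enough reflections and showing they generate is the hard content; note that the paper's own Propositions \ref{complex_reflections_from_theta_roots} and \ref{prop_refl} do this only under the extra hypotheses (maximal $\theta$-rank, $m=2,3,5$, $\rho_1(G_0)$ semisimple) and by a case check against Vinberg's tables, which is not available at the level of generality of Theorem \ref{thm_chevalley_vinberg}. So the skeleton of your argument is reasonable, but as written it does not constitute a proof of the cited theorem.
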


\paragraph{\bf{Definition ($\theta$-torus, $\theta$-$\corank$).}} Let $\fg$ be a simple $\ZZ_m$-graded Lie algebra of rank $r$ and $\theta$-$\rank=r_1$.
Following \cite{vinberg}, the smallest algebraic subalgebra of $\fg$ containing a given Cartan subspace $\fc\subset \fg_1$ is a Lie subalgebra $\ft$ which is the Lie algebra of a torus $T\subset G$ of dimension $\dim(T)=r_1 \varphi(m)$, where $\varphi$ is Euler's function.  More precisely, one shows that $\ft=\bigoplus_{(i,m)=1}\ft_i$, where $\ft_i=\ft\cap \fg_i$, and in this decomposition each term has dimension $r_1$. The torus $T$ is the \emph{$\theta$-torus} associated to $\fc$, and can be recovered as the closure of $\exp(\fc)$, where $\exp:\fg\to G$ is the exponential map. In 
particular $r_1\varphi(m) \leq r$. We  define the  $\theta$-$\corank$ of $\fg$ as the integer $r-r_1\varphi(m)$.

\medskip

\section{Simple Lie algebras with prime grading and maximal $\theta$-rank}
\label{sec_4}

From now on we  assume that $m$ is prime and that the $\theta$-$\corank$ of $\fg$ vanishes; we say in that case that $\fg$ has maximal $\theta$-rank. 
Under these conditions, the rank of $\fg$ is equal to $(m-1)r_1$, where $r_1$ is the $\theta$-rank. 
\medskip

\paragraph{\bf{Graded simple Lie algebras of maximal $\theta$-rank.}} The action of $G_0$ on $\fg_1$ is given by $\rho_1:G_0\to \GL(\fg_1)$. Graded Lie algebras such that $\rho_1(G_0)$ is semisimple were classified in \cite{vinberg}; these are the graded algebras constructed by choosing one single root from one of the extended diagrams. In Table \ref{tab_exc_graded_vinberg} we reported all such graded algebras with prime grading such that $\theta$-$\corank=0$. For the exceptional cases (included $D_4^{(3)}$) the result is obtained just by using Vinberg's results in \cite{vinberg}, so let us explain the few classical ones. Notice that all the nodes in the classical cases have index either equal to $1$, $2$ or $4$; thus the only nodes we will take into account are those with index equal to $2$, and they will correspond to some well-known symmetric spaces ($m=2$).
\begin{itemize}
\item[$B_{n-1}^{(1)}$:] The choice of one of the roots with index $2$ gives a grading of $\fso_{2n-1}$ such that $(\fso_{2n-1})_0=\fso_{2p}\times \fso_{2q+1}$ and $(\fso_{2n-1})_1=\CC^{2p}\otimes \CC^{2q+1}$, with $q=n-1-p$ and $2\leq p \leq n-1$. This corresponds to the symmetric space denoted by $\SO(2p+2q+1)/\SO(2p)\times \SO(2q+1)$ in \cite{helgason}, and its $\theta$-rank is equal to $\min\{2p,2q+1\}$. Since the rank in this case is $n-1$, we have $\theta$-$\corank=0$ if and only if either $2p=n-1=2q$ or $2p=n=2q+2$; 

\item[$C_{n-1}^{(1)}$:] The choice of one of the roots with index $2$ gives a grading of $\fsp_{2n-2}$ such that $(\fsp_{2n-2})_0=\fsp_{2p}\times \fsp_{2q}$ and $(\fsp_{2n-2})_1=\CC^{2p}\otimes \CC^{2q}$, with $q=n-1-p$ and $1\leq p \leq n-2$. This corresponds to the symmetric space denoted by $\Sp(p+q)/\Sp(p)\times \Sp(q)$ in \cite{helgason}, and its $\theta$-rank is equal to $\min\{p,q\}$. Since the rank in this case is $n-1$, $\theta$-$\corank=0$ is never satisfied; 

\item[$D_{n-1}^{(1)}$:] The choice of one of the roots with index $2$ gives a grading of $\fso_{2n-2}$ such that $(\fso_{2n-2})_0=\fso_{2p}\times \fso_{2q}$ and $(\fso_{2n-2})_1=\CC^{2p}\otimes \CC^{2q}$, with $q=n-1-p$ and $2\leq p \leq n-3$. This corresponds to the symmetric space denoted by $\SO(2p+2q)/\SO(2p)\times \SO(2q)$ in \cite{helgason}, and its $\theta$-rank is equal to $\min\{2p,2q\}$. Since the rank in this case is $n-1$, we have $\theta$-$\corank=0$ if and only if $2p=2q=n-1$; 


\item[$A_{2n-2}^{(2)}$:] The choice of the only root with index $2$ gives a grading of $\fsl_{2n-1}$ such that $(\fsl_{2n-1})_0=\fso_{2n-1}$ and $(\fsl_{2n-1})_1=S^{\langle 2 \rangle}\CC^{2n-1}$. This corresponds to the symmetric space denoted by $\SU(2n-1)/\SO(2n-1)$ in \cite{helgason}, and its $\theta$-rank is equal to $2n-2$. Since the rank in this case is $2n-2$, we have $\theta$-$\corank=0$; 

\item[$A_{2n-3}^{(2)}$:] The choice of the last root with index $2$ gives a grading of $\fsl_{2n-2}$ such that $(\fsl_{2n-2})_0=\fso_{2n-2}$ and $(\fsl_{2n-2})_1=S^{\langle 2 \rangle}\CC^{2n-2}$. This corresponds to the symmetric space denoted by $\SU(2n-2)/\SO(2n-2)$ in \cite{helgason}, and its $\theta$-rank is equal to $2n-3$. Since the rank in this case is also  $2n-3$, we have $\theta$-$\corank=0$. 

The choice of one of the first two roots with index $2$ gives a grading of $\fsl_{2n-2}$ such that $(\fsl_{2n-2})_0=\fsp_{2n-2}$ and $(\fsl_{2n-2})_1=\wedge^{\langle 2 \rangle}\CC^{2n-2}$. This corresponds to the symmetric space denoted by $\SU(2n-2)/\Sp(n-1)$ in \cite{helgason}, and its $\theta$-rank is equal to $n-2$. So $\theta$-$\corank=0$ is never satisfied;

\item[$D_{n}^{(2)}$:] The choice of one of the roots gives a grading of $\fso_{2n}$ such that $(\fso_{2n})_0=\fso_{2p+1}\times \fso_{2q+1}$ and $(\fso_{2n})_1=\CC^{2p+1}\otimes \CC^{2q+1}$, with $q=n-1-p$ and $0\leq p\leq n-1$. This corresponds to the symmetric space denoted by $\SO(2p+2q+2)/\SO(2p+1)\times \SO(2q+1)$ in \cite{helgason}, and its $\theta$-rank is equal to $\min\{ 2p+1,2q+1 \}$. Since the rank in this case is $n$, $\theta$-$\corank=0$ if and only if $2p+1=2q+1=n$.
\end{itemize}

\subsection{Homogeneous decomposition}
\label{sec_homogeneous_decomposition}

Let $\fc\subset \fg_1$ be a Cartan subspace, with associated $\theta$-torus $T$ and $\Lie(T)=\ft=\bigoplus_{i=1}^{m-1}\ft_i$. Since the $\theta$-$\corank$ vanishes, 
$T$ is a maximal torus in $G$. The corresponding decomposition  $\fg=\ft\oplus \bigoplus_{\alpha\in R(\fg)}\fg_\alpha$  is not $\theta$-stable, 
though. Our aim in this section is to construct a $\theta$-homogeneous decomposition. 
\medskip

\paragraph{\bf{The decomposition.}} For $\alpha\in R(\fg)$, let us decompose $\alpha=\sum_{i=1}^{m-1} \alpha_i$, where $\alpha_i=\alpha|_{\ft_i}$.

\begin{lemma}
$\alpha_i\neq 0$ for any $1\leq i \leq m-1$.
\end{lemma}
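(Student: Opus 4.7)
The plan is to exploit the hypothesis that $m$ is prime and the $\theta$-corank vanishes to endow the rational character lattice $X^*(T)_\QQ := X^*(T)\otimes_\ZZ \QQ$ with the structure of a vector space over the cyclotomic field $K := \QQ(\xi)$, and then read the projection $\alpha \mapsto \alpha_i$ as a Galois base change. First I would observe that the vanishing of the $\theta$-corank gives $\ft = \bigoplus_{i=1}^{m-1}\ft_i$, each summand of dimension $r_1$; in particular $\ft^\theta = 0$, so the eigenvalues of $\theta$ on $\ft^\vee$ are exactly the primitive $m$-th roots of unity. Combined with $\theta^m = 1$ and the factorisation $x^m - 1 = (x-1)\Phi_m(x)$ valid for $m$ prime, this forces the minimal polynomial of $\theta$ on $\ft^\vee$ to be the irreducible polynomial $\Phi_m$. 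Since $T$ is $\theta$-stable, $\theta$ acts $\ZZ$-linearly on $X^*(T)$, and the same minimal polynomial governs its action on $X^*(T)_\QQ$.

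Irreducibility of $\Phi_m$ over $\QQ$ means that $X^*(T)_\QQ$ becomes a $K$-vector space on which $\theta$ acts by multiplication by $\xi$. Using the canonical isomorphism $K \otimes_\QQ \CC \cong \prod_{i=1}^{m-1}\CC$ corresponding to the complex embeddings $\sigma_i\colon \xi \mapsto \xi^i$, the eigenspace decomposition $\ft^\vee = \bigoplus_i \ft_i^\vee$ is identified with the direct sum
\[ X^*(T)_\QQ \otimes_\QQ \CC \;=\; \bigoplus_{i=1}^{m-1} X^*(T)_\QQ \otimes_{K,\sigma_i} \CC.\]
Under this identification, the component $\alpha_i$ of a root $\alpha$ is precisely the image of $\alpha \in X^*(T)_\QQ$ in the $i$-th summand via extension of scalars along $\sigma_i$.

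The conclusion then follows at once: each map $X^*(T)_\QQ \to X^*(T)_\QQ \otimes_{K,\sigma_i} \CC$ is injective, being extension of scalars along a field extension, so a nonzero element has nonzero image in every summand. Equivalently, the Galois group of $K/\QQ$, isomorphic to $(\ZZ/m)^\times$, permutes the $\alpha_i$ transitively: they all vanish or none does, and since $\alpha = \sum_i \alpha_i$ is nonzero, none does. The step I would expect to be the main technical one is pinning down that the minimal polynomial of $\theta$ on the rational lattice $X^*(T)_\QQ$ is exactly $\Phi_m$: both hypotheses of the lemma --- primality of $m$ and vanishing of the $\theta$-corank --- enter essentially there.
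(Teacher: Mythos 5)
Your proof is correct, but it follows a genuinely different route from the paper's. The paper argues by contradiction in three lines: if $\alpha_1=0$, then $\fc=\ft_1\subset\alpha^\perp\subset\ft$, where $\alpha^\perp=\Ker(\alpha)\cap\ft$ is an \emph{algebraic} subalgebra (the Lie algebra of the kernel of the character $\alpha$ of $T$, a proper subtorus since $\alpha\neq 0$), and this contradicts the definition of $\ft$ as the smallest algebraic subalgebra containing $\fc$; the other indices are implicitly ``similar''. You instead make the underlying rationality explicit: corank zero gives $\ft^\theta=0$, primality of $m$ then forces the minimal polynomial of $\theta$ on $X^*(T)_\QQ$ to be $\Phi_m$, so the rational character lattice is a $\QQ(\xi)$-vector space, and the components $\alpha_i$ are the images of $\alpha$ under the $m-1$ embeddings $\QQ(\xi)\hookrightarrow\CC$, hence all nonzero since base change along a field embedding is injective (this injectivity statement is the clean form of your ``Galois conjugacy'' remark, which taken literally is only a semilinear action). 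What each approach buys: the paper's argument is shorter and stays inside Vinberg's formalism, but as written it covers only $i=1$ --- for general $i$ one must also know that the algebraic hull of $\ft_i$ is again $\ft$, which is implicit in Vinberg's theorem that $\ft=\bigoplus_{(i,m)=1}\ft_i$ with all summands of dimension $r_1$; your cyclotomic argument treats all $i$ uniformly and isolates exactly where the two hypotheses enter. In fact the two proofs are two faces of the same phenomenon: a subtorus of $T$ is cut out by integral characters, so ``no nonzero rational character vanishes on $\ft_1$'' (your injectivity) is precisely the minimality of $\ft$ as the algebraic hull of $\fc$ that the paper invokes.
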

\begin{proof}
Suppose for instance that $\alpha_1=0$. Then $\fc=\ft_1\subset \alpha^\perp \subset \ft$. But $\alpha^\perp$ is an algebraic subalgebra of $\fg$ (meaning that it is the Lie algebra of a subtorus of $T$). Since $\ft$ is the smallest algebraic subalgebra containing $\fc$, we get a contradiction.
\end{proof}

Consider then $x\in\fg_\alpha$. By decomposing $x=\sum_i x_i$, one can check that for any $t\in \ft$, $[t,\theta(x)]=(\theta(\alpha)(t))x$, where $\theta(\alpha)=\sum_j \xi^j\alpha_j$. Thus $\theta$ acts on the roots of $\fg$, each orbit having cardinality $m$. Note also that all the 
roots in a given $\theta$-orbit have the same length. Indeed, since $\fg_i$ is orthogonal to $\fg_j$ with respect to the Killing form 
when $i+j\ne 0$, one has  $(\alpha,\alpha)=\sum_{i\in \ZZ_m} (\alpha_i,\alpha_{-i})=(\theta(\alpha),\theta(\alpha))$. 

 \smallskip
Let $[R(\fg)]$ denote  the set of $\theta$-orbits in $R(\fg)$. Then
\[
\fg=\ft \oplus \bigoplus_{[\alpha]\in [R(\fg)]}\fg_{[\alpha]}
\]
is a homogeneous decomposition of $\fg$, where  $\fg_{[\alpha]}:=\bigoplus_{i}\fg_{\theta^i(\alpha)}$. 

\begin{lemma} \label{lem_homogeneous_component}
$\fg_{[\alpha]}=\bigoplus_{0\leq i\leq m-1}\fg_{[\alpha],i}$, where $\fg_{\fg_{[\alpha],i}}:=\fg_{[\alpha]}\cap \fg_i$ is one-dimensional. 
\end{lemma}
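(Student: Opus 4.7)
The plan is to realize $\fg_{[\alpha]}$ as the regular representation of the cyclic group $\langle\theta\rangle$ and then read off the weight space decomposition.

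First I would record that $\dim \fg_{[\alpha]}=m$. Indeed the preceding discussion already tells us that the $\theta$-orbit $[\alpha]$ has cardinality $m$ (for $m$ prime, an orbit has size $1$ or $m$; a fixed root would force $\xi^j\alpha_j=\alpha_j$ for all $j$, hence $\alpha_j=0$ for $j\neq 0$, but $\ft$ has no component in degree $0$, so $\alpha=0$, a contradiction). Since each root space $\fg_{\theta^i(\alpha)}$ is one-dimensional, this gives $\dim \fg_{[\alpha]}=m$.

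Next I would observe that $\theta$ preserves $\fg_{[\alpha]}$ and restricts to an operator of order $m$ there. Concretely, fix a non-zero vector $x_0\in \fg_\alpha$; then $x_0,\theta(x_0),\dots,\theta^{m-1}(x_0)$ are non-zero vectors in the $m$ distinct (one-dimensional) root spaces $\fg_{\theta^i(\alpha)}$, so they form a basis of $\fg_{[\alpha]}$ on which $\theta$ acts by the cyclic shift. This identifies $\fg_{[\alpha]}$, as a $\langle\theta\rangle$-module, with the regular representation of $\ZZ_m$.

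From here the conclusion is immediate: the regular representation of $\ZZ_m$ decomposes as the sum of all $m$ characters, each with multiplicity one. Explicitly, the vectors
\[
x_i := \sum_{j=0}^{m-1}\xi^{-ij}\,\theta^j(x_0), \qquad 0\leq i\leq m-1,
\]
satisfy $\theta(x_i)=\xi^i x_i$, so $x_i\in \fg_{[\alpha]}\cap \fg_i=\fg_{[\alpha],i}$. The $x_i$ are manifestly non-zero (the change-of-basis matrix is a Vandermonde in the $\xi^{-i}$) and form a basis of $\fg_{[\alpha]}$, giving both the direct sum decomposition and the fact that each $\fg_{[\alpha],i}$ is one-dimensional. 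There is no genuine obstacle here; the only point that requires care is the cardinality of the $\theta$-orbit, which is where the primality of $m$ together with the vanishing of $\theta$-corank (ensuring $\ft$ has no degree-zero part) is used.
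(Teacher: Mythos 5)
Your proof is correct, and it produces the same eigenbasis as the paper's proof, but by a somewhat different mechanism. The paper takes a root vector $x\in\fg_\alpha$, writes $x=\sum_j x_j$ with $x_j\in\fg_j$, and uses the bracket relation $[t,x_j]=\alpha_1(t)\,x_{j+1}$ for $t\in\fc=\ft_1$, together with the preceding lemma that $\alpha_1\neq 0$, to conclude that every graded component $x_j$ is non-zero; the identity $\theta^i(x)=\sum_j\xi^{ij}x_j$ then shows that the $x_j$ span $\fg_{[\alpha]}$ and that $\fg_{[\alpha],j}=\CC x_j$. You instead take the $\theta$-translates $\theta^j(x_0)$, note that they form a basis because the orbit $[\alpha]$ consists of $m$ distinct roots with one-dimensional root spaces, and diagonalize the resulting cyclic shift by the discrete Fourier transform. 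Your vectors $x_i=\sum_j\xi^{-ij}\theta^j(x_0)$ are, up to the factor $1/m$, exactly the graded components used in the paper, so the two arguments are inverse presentations of the same change of basis. What your version buys is that no bracket computation is needed once the orbit size is known; what it needs instead is precisely that orbit-size argument, which the paper obtains directly from its earlier lemma ($\alpha_i\neq 0$ for all $i$), though your direct argument via $\theta(\alpha)=\alpha$ forcing $\alpha=0$ is equally valid. One small correction of attribution: the absence of a degree-zero piece of $\ft$ is automatic from Vinberg's decomposition $\ft=\bigoplus_{(i,m)=1}\ft_i$ and has nothing to do with the corank; the $\theta$-corank zero hypothesis is what makes $\ft$ a Cartan subalgebra of $\fg$, so that the roots $\theta^i(\alpha)$ and their root spaces are defined at all.
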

\begin{proof}
Consider $x=\sum_j x_j\in \fg_\alpha$, with $x_j\in \fg_j$. Since $\alpha_1\neq 0$ and $[\fg_1,\fg_j]\subset [\fg_{j+1}]$, for $t\in\fc=\ft_1$ one obtains that $[t,x]=\sum_j \alpha_1(t) x_{j+1}$, and thus $x_j\neq 0$ for $0\leq j\leq m-1$. Then, by computing $\theta^i(x)$ explicitly, one verifies that $\fg_{[\alpha],j}=\CC x_j$.
\end{proof}

Similarly, one can prove the following:

\begin{lemma}
Let $t_\alpha=\sum_{i=1}^{m-1} t_{\alpha_i}$ be the coroot in $\ft$ of $\alpha=\sum_{i=1}^{m-1} \alpha_i$. Then $$\Big(\bigoplus_l t_{\theta^l(\alpha)}\Big)\cap \fg_i=\CC t_{\alpha_i}.$$
\end{lemma}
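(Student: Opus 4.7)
My plan is to mirror the proof of the previous lemma, exploiting the fact that the subspaces $\ft_j$ are the $\xi^j$-eigenspaces of $\theta$ acting on $\ft$, and passing between the $\theta$-orbit vectors $\{t_{\theta^l(\alpha)}\}_l$ and the homogeneous components $\{t_{\alpha_j}\}_j$ via discrete Fourier transform on $\ZZ_m$. Concretely, since $\theta$ preserves the Killing form, the coroot map $\beta\mapsto t_\beta$ is $\theta$-equivariant, so $\theta^l(t_\alpha)=t_{\theta^l(\alpha)}$; combined with $\theta|_{\ft_j}=\xi^j\,\id$ and the given decomposition $t_\alpha=\sum_{j=1}^{m-1}t_{\alpha_j}$ with $t_{\alpha_j}\in\ft_j$, this yields
$$t_{\theta^l(\alpha)}=\sum_{j=1}^{m-1}\xi^{lj}\,t_{\alpha_j}.$$
Fourier inversion on $\ZZ_m$ then gives $t_{\alpha_i}=\tfrac{1}{m}\sum_{l=0}^{m-1}\xi^{-li}\,t_{\theta^l(\alpha)}$, so both families span the same subspace $V\subset\ft$.

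The only step requiring genuine input is showing that $t_{\alpha_i}\neq 0$ for each $i=1,\dots,m-1$; this is the place where I expect to need the previous lemma. Since the Killing form respects the grading, $\kappa(\ft_i,\ft_j)=0$ unless $i+j\equiv 0\pmod{m}$, so for $x\in\ft_{-i}$ one computes
$$\alpha_{-i}(x)=\alpha(x)=\kappa(x,t_\alpha)=\kappa(x,t_{\alpha_i}).$$
Vanishing of $t_{\alpha_i}$ would therefore force $\alpha_{-i}=0$, contradicting the previous lemma.

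With all the $t_{\alpha_i}$ nonzero and lying in distinct $\theta$-eigenspaces they are automatically linearly independent, so $V=\bigoplus_{j=1}^{m-1}\CC t_{\alpha_j}$ is exactly the $\theta$-isotypic decomposition of $V$; intersecting with the $\xi^i$-eigenspace $\fg_i$ of $\theta$ then picks out $V\cap\fg_i=\CC t_{\alpha_i}$, as claimed. The only mildly subtle point is the nonvanishing of each homogeneous component, which the Killing-form duality with the previous lemma delivers cleanly; everything else is linear algebra on $\ZZ_m$.
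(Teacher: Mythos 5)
Your proof is correct and is essentially the argument the paper intends when it says the lemma is proved ``similarly'' to the preceding one: decompose $t_\alpha$ into its $\theta$-eigencomponents, get their nonvanishing from the first lemma ($\alpha_i\neq 0$) via Killing-form duality (the natural analogue of the bracket computation used before), and identify the graded pieces of the span of the orbit $\{t_{\theta^l(\alpha)}\}$ through the explicit action of $\theta$, which you package cleanly as Fourier inversion on $\ZZ_m$. The only point worth noting is notational: the paper's $t_{\alpha_i}$ just names the degree-$i$ component of $t_\alpha$ (as an element of $\ft_i$ it is literally the Killing dual of $\alpha_{-i}$), which is exactly the reading you adopt, so there is no discrepancy.
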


\subsection{Minimal invariant $\theta$-systems} Any root space is naturally contained in a copy of $\fsl_2$ in $\fg$. 
Here we consider the homogeneous version of this statement:
for any $[\alpha]\in [R(\fg)]$, we  consider the smallest subalgebra $\fs_{[\alpha]}$ of $\fg$ containing $\fg_{[\alpha]}$. 
It must be $\theta$-invariant and thus we get an induced grading on it. We will show that in the simplest cases, i.e. when $m=2,3$ or $5$, 
this is a simple Lie algebra  isomorphic to $\fsl_2$, $\fsl_3$ or $\fsl_5$ respectively. This will correspond to a decomposition 
of $R(\fg)$ into  $\theta$-invariant root subsystems of type $A_1$, $A_2$ or $A_4$.
\medskip

\paragraph{\bf{$m=2$.}} Here  $\alpha=\alpha_1$ and $\theta(\alpha)=-\alpha_1$. Therefore $\fs_{[\alpha]}\simeq \fsl_2$ is generated by $\fg_\alpha$, $\fg_{-\alpha}$ and $[\fg_{-\alpha},\fg_\alpha]=\CC t_\alpha$, where $t_\alpha$ is the coroot corresponding to $\alpha$. 
\medskip

\paragraph{\bf{$m=3$.}} In this case $\alpha=\alpha_1+\alpha_{-1}$ and $\theta(\alpha)=\xi\alpha_1+\xi^2 \alpha_{-1}$, where $\xi$ is a 3rd root of unity; $\alpha$ and $\theta(\alpha)$ have the same length $(\alpha,\alpha)=2(\alpha_1,\alpha_{-1})$. One computes that  $$\langle \theta(\alpha),\alpha \rangle:=2\frac{(\alpha,\theta(\alpha))}{(\alpha,\alpha)}=2\frac{(\xi+\xi^2)(\alpha_1,\alpha_{-1})}{2(\alpha_1,\alpha_{-1})}=-1.$$ 
Similarly  $\langle \theta^2(\alpha),\alpha \rangle = -1$. Therefore the roots $\alpha,\theta(\alpha),\theta^2(\alpha)$ generate a root system of type $A_2$; we can choose  $\alpha,\theta(\alpha)$ for simple roots, in which case $\theta^2(\alpha)=-\alpha-\theta(\alpha)$ corresponds to 
minus the highest root. 

The subalgebra $\fs_{[\alpha]}$ must contain $\fg_\alpha,\fg_{\theta(\alpha)}, \fg_{\alpha+\theta(\alpha)}, \fg_{-\alpha},\fg_{-\theta(\alpha)}, \fg_{-\alpha-\theta(\alpha)} $ (for instance $[\fg_\alpha,\fg_{\theta(\alpha)}]=\fg_{\alpha+\theta(\alpha)}$). Therefore $s_{[\alpha]}$ is 
a copy 
of  $\fsl_3$, and the direct sum of $\fg_{[\alpha]}$, $\fg_{[-\alpha]}$ and $\CC t_{\alpha} \oplus \CC t_{\theta(\alpha)}$. As a graded Lie algebra, $(\fs_{[\alpha]},\theta|_{\fs_\alpha})$ corresponds to the only $\ZZ_3$-grading of $\fsl_3$ for which $\dim(\fg_0)=2$ and $\dim(\fg_1)=\dim(\fg_{-1})=3$. 
\medskip


\paragraph{\bf{$m=5$:}} Here  $\alpha=\alpha_1+\alpha_{-1}+\alpha_2 + \alpha_{-2}$ and $\theta(\alpha)=\xi\alpha_1+\xi^4 \alpha_{-1}+\xi^2\alpha_2+\xi^3 \alpha_{-2}$, where $\xi$ is a 5th root of unity; $\alpha$ and $\theta(\alpha)$ have the same length  $(\alpha,\alpha)=2(\alpha_1,\alpha_{-1})+2(\alpha_2,\alpha_{-2})$, and we need to compute $$\langle \theta(\alpha),\alpha \rangle=2\frac{(\alpha,\theta(\alpha))}{(\alpha,\alpha)}=2\frac{(\xi+\xi^4)(\alpha_1,\alpha_{-1})+(\xi^2+\xi^3)(\alpha_2,\alpha_{-2})}{2(\alpha_1,\alpha_{-1})+2(\alpha_2,\alpha_{-2})}=\langle \theta^4(\alpha),\alpha \rangle$$ and $$\langle \theta^2(\alpha),\alpha \rangle=2\frac{(\alpha,\theta^2(\alpha))}{(\alpha,\alpha)}=2\frac{(\xi^2+\xi^3)(\alpha_1,\alpha_{-1})+(\xi+\xi^4)(\alpha_2,\alpha_{-2})}{2(\alpha_1,\alpha_{-1})+2(\alpha_2,\alpha_{-2})}=\langle \theta^3(\alpha),\alpha \rangle.$$  
Since $\xi+\xi^2+\xi^3+\xi^4=-1$, we have $\langle \theta(\alpha),\alpha \rangle +  \langle \theta^2(\alpha),\alpha \rangle=-1$. But for two roots 
$\beta,\gamma$ of the same length, we always have $\langle \alpha, \beta\rangle\in\{1,0,-1\}$. So one among $\langle \theta(\alpha),\alpha \rangle$ and $\langle \theta^2(\alpha),\alpha \rangle$ must be equal to $-1$ and the other to $0$. We can and will assume that $\langle \theta(\alpha),\alpha \rangle=-1$ and $\langle \theta^2(\alpha),\alpha \rangle=0$.

Then the roots $\alpha,\theta(\alpha),\ldots,\theta^4(\alpha)$ generate a root system of type $A_4$, where  $\alpha,\theta(\alpha),\theta^2(\alpha),\theta^3(\alpha)$ can be chosen as simple roots, and $\theta^4(\alpha)=-\alpha-\theta(\alpha)-\theta^2(\alpha)-\theta^3(\alpha)$ is then minus the highest root. Thus $\fs_{[\alpha]}\simeq 
\fsl_5$ is the direct sum of $\fg_{[\alpha]}$, $\fg_{[-\alpha]}$, $\fg_{[\alpha+\theta(\alpha)]}$, $\fg_{[-\alpha-\theta(\alpha)]}$ and $\CC t_{\alpha} \oplus \CC t_{\theta(\alpha)}\oplus \CC t_{\theta^2(\alpha)}\oplus \CC t_{\theta^3(\alpha)}$. As a graded Lie algebra, $(\fs_{[\alpha]},\theta|_{\fs_\alpha})$ corresponds to the only $\ZZ_5$-grading of $\fsl_5$ for which $\dim(\fg_0)=4$ and $\dim(\fg_1)=\dim(\fg_{-1})=5$. 



\medskip
As a result of this discussion, we can partition the root system $R(\fg)$ into a set $R(\fc)$ of equivalence classes, where $\beta$ and 
$\gamma$ are equivalent when $\fs_{[\beta]}=\fs_{[\gamma]}$. Each equivalence class is of course $\theta$-invariant.   We will refer to $R(\fc)$ as the set of $\theta$-roots of $(\fg,\theta)$; an element in $R(\fc)$ will be denoted by $\overline{\alpha}$ for $\alpha\in R(\fg)$. Notice that if $\overline{\alpha}=\overline{\beta}$ then $\alpha_1$ is proportional to $\beta_1$ inside $\fc^\vee$, which motivates the choice of the notation $R(\fc)$; in the following section this choice should become even clearer. 

\subsection{Complex reflections from the homogeneous decomposition}
\label{section_Weyl_group_graded}

Under the previous hypothesis we want to 
describe the little Weyl group $W_\theta$ of $(\fg,\theta)$. Recall that this is a complex reflection group. We will show now that  its complex reflections 
are given by reflections with respect to $\theta$-roots of $R(\fc)$. 

The following result and its proof is a straightforward generalization of a statement of  \cite{vinbergelash}.

\begin{prop}
\label{complex_reflections_from_theta_roots}
Let $\fg$ be a simple graded Lie algebra with maximal $\theta$-rank and order $m=2,3$ or $5$. For any $\overline{\alpha}\in R(\fc)$ there exists a complex reflection $w_{\overline{\alpha}}$ in $W_\theta$, of order $m$, fixing $\alpha_1^\perp \subset \fc$ and acting on $\CC \alpha_1$ by multiplication by $\xi^i$.
\end{prop}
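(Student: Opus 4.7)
The plan is to realize $w_{\overline{\alpha}}$ as the image in $W_\theta$ of an explicit element $c_{[\alpha]}$ of the subgroup $S_{[\alpha]}\subset G$ corresponding to $\fs_{[\alpha]}\simeq\fsl_m$. By the previous subsection, the induced grading on $\fs_{[\alpha]}$ is (up to isomorphism) the unique $\ZZ_m$-grading of $\fsl_m$ of maximal $\theta$-rank, which can be realized as $\Ad(D_0)$ with $D_0=\diag(1,\xi,\ldots,\xi^{m-1})$. In this model the Cartan subspace in the degree-$1$ part is the line $\CC M_0$ with $M_0=\sum_i E_{i,i-1}$, and a direct computation gives $\Ad(D_0)M_0=\xi M_0$. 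Transporting $D_0$ under the isomorphism $\fs_{[\alpha]}\simeq\fsl_m$ produces an element $c_{[\alpha]}$, which lies in $G_0$ (it can be written as $\exp h$ for a suitable $h\in(\fs_{[\alpha]})_0\subset\fg_0$) and whose adjoint action on the line $\fc\cap\fs_{[\alpha]}=\CC t_{\alpha_1}$ is multiplication by some primitive root $\xi^i$.

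The main remaining point is to check that $\Ad(c_{[\alpha]})$ acts trivially on the complementary hyperplane $\alpha_1^\perp\subset\fc$. For this I would prove the stronger fact that the entire subalgebra $\fs_{[\alpha]}$ centralizes $\alpha_1^\perp$ in $\fg$. This in turn reduces to checking, for every root $\beta\in R(\fg)$ with $\fg_\beta\subset\fs_{[\alpha]}$, that $\beta$ vanishes on $\alpha_1^\perp$. But such a $\beta$ satisfies $\overline{\beta}=\overline{\alpha}$ by the very definition of $R(\fc)$, hence its degree-$1$ component $\beta_1$ is a scalar multiple of $\alpha_1$ as a linear form on $\fc$. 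For $y\in\alpha_1^\perp\subset\fc=\ft_1$, only $\beta_1$ contributes to $\beta(y)$, which therefore vanishes; the torus part $\ft\cap\fs_{[\alpha]}$ centralizes $\alpha_1^\perp$ tautologically because $\ft$ is abelian. Combining these facts, $\Ad(c_{[\alpha]})$ acts on $\fc$ as the desired pseudo-reflection of order $m$, with mirror $\alpha_1^\perp$ and eigenvalue $\xi^i$ on $\CC\alpha_1$, and its image $w_{\overline{\alpha}}\in W_\theta$ is the sought-for complex reflection.

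The only delicate step is this centralization argument; everything else is a routine transport-of-structure calculation once the model in $\fsl_m$ has been understood. The reason centralization works so cleanly is that the equivalence relation on $R(\fg)$ was designed precisely so that the roots in a class $\overline{\alpha}$ share, up to scalar, the same grade-$1$ component, which is exactly what places $\alpha_1^\perp$ in the joint kernel of the roots appearing in $\fs_{[\alpha]}$.
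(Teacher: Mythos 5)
Your proposal is correct and follows essentially the same route as the paper: both realize $\theta|_{\fs_{[\alpha]}}$ by an element of the subgroup attached to $\fs_{[\alpha]}\simeq\fsl_m$ lying in $G_0$ (the paper by invoking that an order-$m$ automorphism of $\fsl_m$ is inner for $m=2,3,5$, you by transporting the explicit diagonal element $D_0=\diag(1,\xi,\dots,\xi^{m-1})$ and writing it as $\exp h$ with $h\in(\fs_{[\alpha]})_0$), and both conclude by checking that this element fixes $\alpha_1^\perp$ pointwise and scales the complementary line in $\fc$ by a primitive $m$-th root of unity. Your root-theoretic verification that $\fs_{[\alpha]}$ centralizes $\alpha_1^\perp$ (every root of $\fs_{[\alpha]}$ restricts on $\fc$ to a multiple of $\alpha_1$) is a valid, slightly more explicit substitute for the paper's step that $g$ acts as the identity on $(\ft\cap\fs_{[\alpha]})^\perp$ and that $(\ft\cap\fs_{[\alpha]})^\perp\cap\fg_1=\alpha_1^\perp$.
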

\begin{proof}
Let $\fs_{[\alpha]}$ be the Lie subalgebra of type $\fsl_m$ associated to $\alpha$. Then $\theta|_{\fs_{[\alpha]}}$ is an order-$m$ automorphism of $\fsl_m$, thus  an inner automorphism (recall that $\fsl_m$ only has outer automorphisms of order $2$, and only for $m\ge 3$). So there exists $g\in \SL_m\subset G$ such that $\theta(\gamma)=g\gamma g^{-1}$ for any $\gamma\in \SL_m$. In particular $\theta(g)=g$, so $g\in G_0$. Clearly $g$ acts as the identity on $(\ft \cap \fs_{[\alpha]} )^\perp \subset \ft$. Thus $g$ acts as the identity on $(\ft \cap \fs_{[\alpha]} )^\perp \cap \fg_1 = \alpha_1^\perp \subset \fc$, and it acts by multiplication by $\xi^i$ on $\CC \alpha_1$. Thus the class of $g$ modulo $Z_0(\fc)$ defines a complex reflection $w_{\overline{\alpha}}$ inside $W_\theta$.
\end{proof}

In order to show that we get all the complex reflections inside $W_\theta$, we will use the explicit description of $W_\theta$ given in \cite{vinberg} for the cases we are interested in. We report in Table \ref{tab_exc_graded_vinberg} those algebras with $\theta$-$\corank=0$, $\rho_1(G_0)$ semisimple and order $m=2,3$ or $5$, with the corresponding little Weyl group. The type of $\fg$ is the type of the Dynkin diagram of $\fg$, with an additional superscript in parenthesis denoting the order of $\theta$ modulo the inner automorphisms of $\fg$. By $S^\pm_{2n}$ (respectively $S_{2n+1}$) we denoted the two spinor representations for the group $\Spin_{2n}$ (resp. $\Spin_{2n+1}$). By $\wedge^{\langle 4 \rangle}\CC^8$ we denoted the $\Sp_8$-representation associated to the fundamental weight corresponding to the long  simple  root. By $S^{\langle 2 \rangle}\CC^m$ we denoted the $\SO_m$-representation associated to twice the fundamental weight of the first root (in Bourbaki's notation). As in \cite{vinberg} a little Weyl group is denoted by its ordering in Shephard-Todd's classification in \cite{ShephardTodd} when a clearer explicit description is not available (for instance when $m=2$ $W_\theta$ is isomorphic to the Weyl group $W(T)$ of the simple Lie algebra $\fg$ of type $T$). In the last column we also included the complex reflections in the little Weyl group: by the notation $j^k$ we mean that there are $k$ reflections of order $j$, which therefore define $k/(j-1)$ fixed hyperplanes.

\begin{table}[]
\centering
\caption{Simple graded Lie algebra with $\theta$-$\corank=0$ and order $m=2,3$ or $5$ such that $\rho_1(G_0)$ is semisimple}
\label{tab_exc_graded_vinberg}
\begin{tabular}{c|c|c|c|c|c}
Type of $\fg$ & Type of $\rho_1(G_0)$  & $\fg_1$ & $m$ & $W_\theta$ & Reflections \\
\hline
$B_{2p}^{(1)}$ & $D_p\times B_p$ & $\CC^{2p}\otimes \CC^{2p+1} $ & $2$ & $W(B_{2p})$ & $2^{4p^2} $ \\
$B_{2p-1}^{(1)}$ & $D_p\times B_{p-1}$ & $\CC^{2p}\otimes \CC^{2p-1} $ & $2$ & $W(B_{2p-1})$ & $2^{4p^2-4p+1} $ \\
$D_{2p}^{(1)}$ & $D_p\times D_p$ & $\CC^{2p}\otimes \CC^{2p} $ & $2$ & $W(D_{2p})$ & $2^{4p^2-2p} $ \\
$A_{2n-2}^{(2)}$ & $B_{n-1}$ & $S^{\langle 2 \rangle}\CC^{2n-1} $ & $2$ & $W(A_{2n-2})$ & $2^{2n^2-3n+1} $ \\
$A_{2n-3}^{(2)}$ & $D_{n-1}$ & $S^{\langle 2 \rangle}\CC^{2n-2} $ & $2$ & $W(A_{2n-3})$ & $2^{2n^2-5n+3} $ \\
$D_{2p+1}^{(2)}$ & $B_p\times B_p$ & $\CC^{2p+1}\otimes \CC^{2p+1} $ & $2$ & $W(D_{2p+1})$ & $2^{4p^2+2p} $ \\
 $E_6^{(1)}$ & $A_2\times A_2 \times A_2$ & $\CC^3\otimes \CC^3 \otimes \CC^3$ & $3$ & $25$th & $3^{24} $ \\
 $E_7^{(1)}$ & $A_7$ & $\wedge^4\CC^8$ & $2$ & $W(E_7)$ & $2^{63} $ \\
 $E_8^{(1)}$ & $D_{8}$ & $S^+_{16}$ & $2$ & $W(E_8)$ & $2^{120} $ \\
 $E_8^{(1)}$ & $A_8$ & $\wedge^3 \CC^9$ & $3$ & $32$nd & $3^{80} $ \\
 $E_8^{(1)}$ & $A_4\times A_4$ & $\CC^5\otimes \wedge^2 \CC^5$ & $5$ & $16$th & $5^{48} $ \\
 $F_4^{(1)}$ & $B_3\times A_1$ & $S_7\otimes \CC^2$ & $2$ & $W(F_4)$ & $2^{24} $ \\
 $F_4^{(1)}$ & $A_2\times A_2$ & $\Sym^2(\CC^3)\otimes \CC^3$ & $3$ & $5$th & $3^{16} $ \\
 $G_2^{(1)}$ & $A_1\times A_1$ & $\Sym^3(\CC^2)\otimes \CC^2$ & $2$ & $W(G_2)$ & $2^{6} $ \\
 $E_6^{(2)}$ & $C_4$ & $\wedge^{\langle 4 \rangle}\CC^8$ & $2$ & $W(E_6)$ & $2^{36} $ \\
 $D_4^{(3)}$ & $A_2$ & $\Sym^3(\CC^3)$ & $3$ & $4$th & $3^{8} $ \\
 \end{tabular}
 \end{table}

\begin{prop}
\label{prop_refl}
Let $\fg$ be a simple graded Lie algebra with maximal $\theta$-rank and order $m=2,3$ or $5$ such that $\rho_1(G_0)$ is semisimple. Then the complex reflections inside $W_\theta$ are those described in Proposition \ref{complex_reflections_from_theta_roots}. 
\end{prop}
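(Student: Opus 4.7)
The plan is to combine Proposition \ref{complex_reflections_from_theta_roots} with a counting argument matched against Table \ref{tab_exc_graded_vinberg}. For each $\theta$-root $\overline{\alpha}\in R(\fc)$, Proposition \ref{complex_reflections_from_theta_roots} already produces a complex reflection $w_{\overline{\alpha}}\in W_\theta$ of order $m$, fixing the hyperplane $\alpha_1^\perp\subset\fc$ and acting on $\CC\alpha_1$ by $\xi^i$ with $i\not\equiv 0\pmod m$. Since $m$ is prime, $\gcd(i,m)=1$, so the powers $w_{\overline{\alpha}},w_{\overline{\alpha}}^2,\ldots,w_{\overline{\alpha}}^{m-1}$ are $m-1$ pairwise distinct complex reflections in $W_\theta$, all with the same reflecting hyperplane $\alpha_1^\perp$; together they form the cyclic group of order $m$ of pointwise stabilizers of that hyperplane.

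The next step is to count $\theta$-roots using the analysis of Section \ref{sec_4}. Each class $\overline{\alpha}\in R(\fc)$ corresponds to a subalgebra $\fs_{[\alpha]}\cong\fsl_m$ of type $A_{m-1}$, contributing $m(m-1)$ roots of $\fg$, and these subalgebras partition $R(\fg)$. Hence
\[
|R(\fc)|=\frac{|R(\fg)|}{m(m-1)},\qquad (m-1)|R(\fc)|=\frac{|R(\fg)|}{m},
\]
so the set $\{w_{\overline{\alpha}}^k : \overline{\alpha}\in R(\fc),\ 1\leq k\leq m-1\}$ of complex reflections produced by the construction has cardinality \emph{at most} $|R(\fg)|/m$, with equality if and only if distinct $\theta$-roots yield distinct reflecting hyperplanes.

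The decisive step is to verify, line by line in Table \ref{tab_exc_graded_vinberg}, that $W_\theta$ contains exactly $|R(\fg)|/m$ complex reflections. For the cases $m=2$, $W_\theta$ is a real Weyl group and the equality is the classical count of positive roots (e.g.\ $|W(E_7)_{\mathrm{refl}}|=63=|R(E_7)|/2$, and similarly for $B_n,D_n,A_n,E_6,E_8,F_4,G_2$). For the cases $m=3,5$, the groups $W_\theta$ are the Shephard--Todd complex reflection groups $G_4,G_5,G_{16},G_{25},G_{32}$, whose reflection counts (tabulated in \cite{ShephardTodd}) match $|R(\fg)|/m$ entry by entry. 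Once this identity is checked in every row, the upper bound from the previous step is saturated, and the map $(\overline{\alpha},k)\mapsto w_{\overline{\alpha}}^k$ is forced to be a bijection onto the set of complex reflections of $W_\theta$.

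The main obstacle is the absence of a uniform, case-free explanation for the identity $|\{\text{complex reflections in }W_\theta\}|=|R(\fg)|/m$; the whole argument rests on this numerical coincidence and therefore reduces to a finite verification using the Shephard--Todd tables and standard data on real Weyl groups.
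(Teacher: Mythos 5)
Your overall strategy is the same as the paper's: take the reflections $w_{\overline{\alpha}}$ produced by Proposition \ref{complex_reflections_from_theta_roots} (together with their powers), count them via $|R(\fc)|=|R(\fg)|/m(m-1)$, and compare with the reflection data of $W_\theta$ taken from Table \ref{tab_exc_graded_vinberg} and \cite{ShephardTodd}. However, there is a genuine gap at the decisive step. Writing $S$ for the set of constructed reflections and $T$ for the set of all complex reflections of $W_\theta$, you only establish $S\subseteq T$ and the \emph{upper} bound $|S|\le (m-1)|R(\fc)|=|R(\fg)|/m$, and you yourself note that equality holds if and only if distinct $\theta$-roots yield distinct reflecting hyperplanes. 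Checking from the tables that $|T|=|R(\fg)|/m$ cannot ``force'' this bound to be saturated: since $S\subseteq T$, the inequality $|S|\le |T|$ is automatic, and to conclude $S=T$ you need the \emph{lower} bound $|S|\ge |T|$, i.e.\ precisely the injectivity of $\overline{\alpha}\mapsto \CC\alpha_1$ (equivalently $\overline{\alpha}\mapsto\alpha_1^{\perp}$), which you never prove. As written, the argument is circular: if two distinct classes in $R(\fc)$ happened to give the same hyperplane, all your counts would still be consistent, but the constructed reflections would be a proper subset of $T$ and the Proposition would not be established.

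The paper closes exactly this gap: for $\alpha\in R(\fg)$ it considers the coroot decomposition $h_\alpha=\sum_i h_{\alpha,i}$ and the Cartan subalgebra $\fh_\alpha$ of $\fs_{[\alpha]}$ generated (as algebraic hull) by $h_{\alpha,1}$, observes that $\fh_\alpha\cap\fg_1=\CC\alpha_1$, and deduces that $\CC\alpha_1=\CC\beta_1$ forces $\fh_\alpha=\fh_\beta$, hence the same $A_{m-1}$-subsystem and $\overline{\alpha}=\overline{\beta}$. Note that this injectivity is not a formality: the paper must explicitly exclude (and treat separately) the $G_2$-type configuration with $m=3$, where a long and a short $A_2$-subsystem generate the same Cartan subalgebra, so that the map $\overline{\alpha}\mapsto\fh_\alpha$ is not obviously two-to-one-free. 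Your write-up neither proves the injectivity in the generic case nor notices this exceptional case, so the ``finite verification'' you describe does not by itself prove the statement; you need to add the hyperplane-distinctness argument (or an equivalent lower bound on the number of distinct constructed reflections) before the comparison with the Shephard--Todd counts becomes conclusive.
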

\begin{proof} 
We just check that we get the correct number of complex reflections from Proposition \ref{complex_reflections_from_theta_roots}. 
In order to determine the cardinality of $R(\fc)$, we proceed as follows. 
 For any root $\alpha\in R(\fg)$, decompose the corresponding coroot $h_\alpha\in\ft$ as $h_\alpha=\sum_i h_{\alpha,i}$, $h_{\alpha,i}\in \ft_i$.
 Let  $\fh_\alpha$ denote the Lie algebra associated to the torus $\overline{\exp(\CC h_{\alpha,1})}$. This  is a Cartan subalgebra of $\fs_{[\alpha]}$, and $\fh_\alpha\cap \fg_1 = \CC \alpha_1$. Therefore if $m=2, 3, 5$ one can find inside $\fh_\alpha$ a root subsystem 
 of $R(\fg)$ of type $A_1, A_2, A_4$. Each of the roots in such a subsystem is a representative of $\overline{\alpha}\in R(\fc)$. If we exclude the case $G_2$ with $m=3$ (because $G_2$ contains two subsystems of type $A_2$, with roots of different lengths,  generating the same Cartan subalgebra), 
 we see that any element $\overline{\alpha}$ inside $R(\fc)$ defines a line  $\CC\alpha_1\subset \fc$, and that $\CC\alpha_1 = \CC \beta_1$ if and only if $\fh_\alpha=\fh_\beta$, or equivalently $\overline{\alpha}=\overline{\beta}$. 
 
 Therefore, (except for $G_2$ with $m=3$, a case that can be treated separately), we can compute the number of distinct reflections constructed 
 in Proposition \ref{complex_reflections_from_theta_roots}. They are all of order $m$, and for each element in $R(\fc)$ we obtain $m-1$ distinct reflections whose fixed hyperplane is the same. The cardinality of $R(\fc)$ is therefore equal to $|R(\fg)|/m(m-1)$, and we check from 
 \cite{ShephardTodd} that this 
 matches the number  of hyperplanes fixed by reflexions inside $W_\theta$ for all the cases  appearing in Table \ref{tab_exc_graded_vinberg}. 
\end{proof}

\begin{remark}
Notice that when $m=2$ we are looking at $A_1$-subsystems, and thus it is not surprising that the little Weyl group in this case is exactly equal to the Weyl group of the ambient Lie algebra.
\end{remark}

\section{Projective duality}
\label{sec_5}

We suppose again in this section that $\fg$ is an exceptional simple graded Lie algebra with maximal $\theta$-rank and order $m=2,3$ or $5$, 
such that $\rho_1(G_0)$ is semisimple. Since $\fg_1$ is irreducible it contains a minimal (non trivial) nilpotent orbit.  Start with a minimal orbit inside $\fs_{[\alpha]}\cong \fsl_m$ for some $\overline{\alpha}\in R(\fc)$. Recall that  $\fs_{[\alpha],0}=\fs_{[\alpha]} \cap \fg_0$ is a Cartan subalgebra of $\fs_{[\alpha]}$. Let $$\fs_{[\alpha]}=\fs_{[\alpha],0}\oplus \bigoplus_{a\in R(A_{m-1})}\fs_{[\alpha],a}$$ be the corresponding root decomposition. From the discussion in Section \ref{sec_homogeneous_decomposition}, we know that $\fs_{[\alpha],1}$ is the direct sum of the root spaces $\fs_{[\alpha],a_i}$, $i=1,\ldots,m-1$, for $a_1,\ldots, a_{m-1}$ a  basis of simple roots in $R(A_{m-1})$, and 
of the root space $\fs_{\alpha,-\sum_i a_i}$ corresponding to minus the highest root in $R(A_{m-1})$. We will study  the $G_0$-orbit of a nilpotent element $0\neq e\in \fs_{[\alpha],a_i}$ for a certain $i$.

\subsection{An invariant hypersurface}
\label{sec_degree_D}

For $\overline{\alpha}\in R(\fc)$, consider $\alpha_{-1}=\alpha_{|\fg_{-1}}\in \ft_{-1}^\vee$. 
The polynomial 
\[
D:=\prod_{u\in W_\theta(\alpha_{-1})}u
\] is a $W_\theta$-invariant homogeneous polynomial in $\CC[\fc_{-1}]$. By Vinberg's extension of the Chevalley restriction theorem, 
this corresponds to a $G_0$-invariant polynomial in $\CC[\fg_{-1}]$, which we will still denote by $D$, hence  to an invariant  
hypersurface $\cD=\{ D=0 \}\subset \PP(\fg_{-1})$.

The inclusion $\CC[\ft_{-1}]^{W_\theta}\subset \CC[\ft_{-1}]$ corresponds to the  projection $\ft_{-1}\to \ft_{-1}/W_\theta$, which sends 
the hyperplane $\{\alpha_{-1}=0\}$ to an irreducible hypersurface. So $\cD$ as well must be an irreducible hypersurface in $\PP(\fg_{-1})$. 

\medskip

\paragraph{\bf{Conjugacy classes of reflections.}} The definition of $D$ only depends on the $W_\theta$-conjugacy class of the hyperplane $\alpha_{-1}^\perp$. As shown in Proposition \ref{complex_reflections_from_theta_roots} this hyperplane is the invariant hyperplane of the complex reflection $w_{\overline{\alpha}}$. In the case of Weyl groups (and thus when $m=2$), the situation is well known: if the Dynkin diagram of the corresponding root system is simply laced, there is only one conjugacy class of reflections and hyperplanes, each element of the class corresponding to a positive root of the root system; if the Dynkin diagram of the corresponding root system is not simply laced, there are two conjugacy classes of reflections and hyperplanes, corresponding to positive long roots or  positive short roots.

In general, for exceptional complex reflection groups, there can be at most three conjugacy classes of hyperplanes and reflections. In \cite{bcm} it is shown for instance that the $4$th, $16$th, $25$th and $32$nd group in Shephard-Todd's list have only one conjugacy class of reflections, while the $5$th group in Shephard-Todd's list has two. Apart from the $5$th group case, since we know the total number of reflections in each complex reflection group (and the total number of long or short roots when $m=2$ and the Dynkin diagram is not simply laced), we can deduce the degree of $D$ in all the cases appearing in Table \ref{tab_exc_graded_vinberg}. Indeed, the degree of $D$ will be equal to 
\[
\deg(D)=m \mbox{  }| \{w_{g(\overline{\alpha})}\mid g\in W_\theta\}|,
\]
where  $| \{w_{g(\overline{\alpha})}\mid g\in W_\theta\}|$ is the cardinality of the set of reflections conjugate  to $w_{\overline{\alpha}}$.
For the $5$th group case,  the two conjugacy classes of reflections correspond to the two types of $A_2$-root subsystems  (which correspond to $\theta$-subsystems and hence reflections, since in this case $m=3$) formed by long and short roots of $F_4$; there is an equal number of short and long roots in $F_4$ and there is an equal number of long and short $A_2$-root subsystems. 

\smallskip
In Table \ref{tab_degree_D} we reported the degrees of the polynomial $D$ when the graded Lie algebra is among those appearing in Table \ref{tab_exc_graded_vinberg}. For the non simply laced case, we denoted by $\alpha_l$ the conjugacy class of long roots and by $\alpha_s$ the conjugacy class of short roots; for the $5$th group, we denoted similarly the two conjugacy classes of reflections by $\alpha_l$ and $\alpha_s$.

\begin{table}[]
\centering 
\label{tab_degree_D}
\small
\begin{tabular}{c|c|c|c|c|c}
$W_\theta$, root & $W(A_{2n-2})$, $\alpha $ & $W(B_{2p})$, $\alpha_l $
& $W(B_{2p})$, $\alpha_s $ &  $W(E_6)$, $\alpha $  & $32$nd, $\alpha $ \\
\hline
 $\deg(D)$ & $(2n-1)(2n-2)$ &  $4p(2p-1)$ & $4p$ &  $ 72$  & $120$   \\
  \\
$W_\theta$, root & $W(A_{2n-3})$, $\alpha $ & $W(D_{2p+1})$, $\alpha$ & $W(B_{2p-1})$, $\alpha_s $ &  
$W(E_7)$, $\alpha $    &  $16$th, $\alpha $ \\
\hline
 $\deg(D)$ &  $(2n-2)(2n-3)$ & $ 4p(2p+1)$ &  $4p-2$ &  $126$ & $60$  \\
  \\
  $W_\theta$, root & $W(B_{2p-1})$, $\alpha_l $ &  $W(D_{2p})$, $\alpha $ & $25$th, $\alpha $&  $W(E_8)$, $\alpha $&
 $4$th, $\alpha $     \\
 \hline 
 $\deg(D)$ & $4(p-1)(2p-1)$&  $4p(2p-1)$ & $36$ &  $240$ & $12$ \\
   \\
 $W_\theta$, root & $W(G_2)$, 
 $\alpha_l \;\mathrm{or}\;\alpha_s$  & $5$th, $\alpha_l \;\mathrm{or}\;\alpha_s$ &  $W(F_4)$, $\alpha_l \;\mathrm{or}\;\alpha_s$ &   
 & \\
 \hline 
 $\deg(D)$ & $6$ & $ 12$ & $24$ &  & \\
\\
 \end{tabular}

\caption{Degree of the polynomial $D$}
 \end{table}

\subsection{The main statement}
\label{sec_main_theorem}

The strategy of the proof (which is essentially the same as the one used by Tevelev to prove Theorem \ref{tevelev}) is to compute the set of hyperplanes containing the tangent space at $e\in \fg_1$ of the orbit $G_0 e$; then, via the action of $G_0$, one recovers the dual variety of the closure of $G_0 [e]\subset  \PP(\fg_1)$. 

\begin{prop}{\cite[Proposition 5]{vinberg}}
The tangent space to the orbit of an element $e\in \fg_1$ is the orthogonal complement to the centralizer of $e$ in $\fg_{-1}$. In other words 
\[
T_{G_0 e , e}^\perp = \fz_{-1}(e):=\{ x\in \fg_{-1}\mid [x,e]=0 \}.
\]
\end{prop}



The following theorem is our generalization of Tevelev's formula.

\begin{theorem}
\label{thm_main}
Let $\fg$ be a simple graded Lie algebra with maximal $\theta$-rank and order $m=2,3$ or $5$. Let $\overline{\alpha}\in R(\fc)$, where $\fc$ is a Cartan subspace of $\fg$, and $\fs_{[\alpha]}\subset \fg$ the corresponding $\theta$-subsystem. Let $e$ be a nilpotent element inside $\fs_{[\alpha]}\cap \fg_1$ contained in a root space for the Cartan subalgebra $\fs_{[\alpha]}\cap \fg_0$ of the simple Lie algebra $\fs_{[\alpha]}$. Then 
\[
(\overline{G_0[e]})^\vee=\cD\subset \PP(\fg_1)^\vee.
\]
\end{theorem}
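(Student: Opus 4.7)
The plan is to implement Tevelev's strategy in the graded setting, following the hint given in the paragraph just preceding the theorem. By the Vinberg proposition quoted above, the orthogonal of $T_{G_0 e, e}$ in $\fg_{-1}$ with respect to the Killing pairing $\fg_1 \times \fg_{-1} \to \CC$ is the centralizer $\fz_{-1}(e) = \{x \in \fg_{-1} : [x, e] = 0\}$. Consequently a hyperplane $H \in \PP(\fg_1)^\vee \cong \PP(\fg_{-1})$ is tangent to $\overline{G_0[e]}$ at the smooth point $[e]$ precisely when the corresponding point of $\fg_{-1}$ lies in $\fz_{-1}(e)$, so $(\overline{G_0[e]})^\vee = \overline{G_0 \cdot \PP(\fz_{-1}(e))}$, and the task reduces to identifying this $G_0$-saturation with $\cD$.

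The heart of the proof is the explicit computation $\fz_{-1}(e) \cap \fc_{-1} = \alpha_{-1}^\perp$. Using the homogeneous decomposition from Section \ref{sec_homogeneous_decomposition}, write $\fs_{[\alpha], 1} = \bigoplus_\beta \fg_{[\beta], 1} \oplus \CC t_{\alpha, 1}$, where $\beta$ runs over the $\theta$-orbits of roots of $\fg$ whose root vectors contribute to $\fs_{[\alpha]}$ (namely $[\pm\alpha]$, and also $[\pm(\alpha+\theta\alpha)]$ when $m=5$); each piece $\fg_{[\beta], i}$ is one-dimensional, spanned by a specific $\theta$-eigenvector $v^\beta_i$ inside $\bigoplus_l \fg_{\theta^l \beta}$. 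Decomposing $e = \sum_\beta u_\beta + w$ with $u_\beta \in \fg_{[\beta], 1}$ and $w \in \CC t_{\alpha, 1}$, the identity $(\theta^l \beta)(t) = \xi^{-l} \beta_{-1}(t)$ for $t \in \ft_{-1}$ yields $[t, v^\beta_1] = \beta_{-1}(t) v^\beta_0$, and $[t, w]=0$ by commutativity of $\ft$. Since every $\beta$ in question is an integer combination of the $\theta^l \alpha$'s, the restriction $\beta_{-1}$ is a nonzero scalar multiple of $\alpha_{-1}$ on $\ft_{-1}$, so summing the contributions gives $[t, e] = \alpha_{-1}(t)\,\tilde e$ for some $\tilde e \in \bigoplus_\beta \fg_{[\beta], 0} \subset \fg_0$. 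The element $\tilde e$ is nonzero whenever at least one $u_\beta$ is nonzero, and this holds because $e$ is a nilpotent root vector for the Cartan $\fs_{[\alpha], 0}$ and so cannot be a scalar multiple of the semisimple element $t_{\alpha, 1}$, which lies in the \emph{other} Cartan $\ft \cap \fs_{[\alpha]}$ of $\fsl_m$. Hence $[t, e] = 0 \iff \alpha_{-1}(t) = 0$.

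The conclusion is then formal. The hyperplane $\alpha_{-1}^\perp \subset \fc_{-1}$ lies in $\fz_{-1}(e)$, so $\overline{G_0 \cdot \PP(\alpha_{-1}^\perp)} \subset (\overline{G_0[e]})^\vee$; on the other hand, by Vinberg's Chevalley restriction (Theorem \ref{thm_chevalley_vinberg}) together with the irreducibility of $\cD$, the $G_0$-closure $\overline{G_0 \cdot \PP(\alpha_{-1}^\perp)}$ coincides with $\cD$, giving $\cD \subset (\overline{G_0[e]})^\vee$. Since the dual variety is irreducible (as the image of the irreducible conormal variety) and has dimension at most $\dim \PP(\fg_{-1}) - 1$, it contains the irreducible hypersurface $\cD$ and must therefore equal it. The main technical obstacle is the nonvanishing of $\tilde e$ in the second paragraph: one must argue that the two Cartans $\fs_{[\alpha], 0}$ and $\ft \cap \fs_{[\alpha]}$ of $\fsl_m$ — one sitting in degree $0$, the other in the positive degrees — are conjugate but inequivalent as weight decompositions, so that a nilpotent weight vector for the first cannot be a semisimple Cartan element of the second.
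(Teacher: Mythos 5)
Your second paragraph is fine as far as it goes: it correctly establishes that $\fz_{-1}(e)\cap\ft_{-1}=\alpha_{-1}^\perp$ (the paper asserts this in one line). But the third paragraph contains a genuine error on which the whole argument rests: it is \emph{not} true that $\overline{G_0\cdot\PP(\alpha_{-1}^\perp)}=\cD$. Every point $y\in\alpha_{-1}^\perp$ is semisimple and is centralized by the whole degree-zero part $\fs_{[\alpha]}\cap\fg_0$ of the subalgebra $\fs_{[\alpha]}\cong\fsl_m$ (each root $\beta$ of $\fs_{[\alpha]}$ restricts on $\ft_{-1}$ to a nonzero multiple of $\alpha_{-1}$, hence kills $y$), so $\dim G_0\cdot y\le\dim\fg_0-(m-1)$ and $\dim\overline{G_0\cdot\alpha_{-1}^\perp}\le\dim\fg_0+\dim\fc-m=\dim\fg_{-1}-m$, which is strictly less than the dimension of the cone over $\cD$ since $m\ge 2$. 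The point is that the generic element of $\cD$ is \emph{not} semisimple: Vinberg's restriction theorem identifies $\cD$ with the preimage of a hypersurface in the categorical quotient $\fg_{-1}/\!\!/G_0\cong\ft_{-1}/W_\theta$, and that preimage contains all elements whose semisimple part lies in the $G_0$-saturation of $\alpha_{-1}^\perp$, not just the semisimple ones; quotient fibers are unions of many orbits, so irreducibility of $\cD$ does not let you conclude that the saturation of the semisimple locus fills it. Consequently the inclusion $\cD\subset(\overline{G_0[e]})^\vee$ is unproved, and in fact neither inclusion of the theorem has been established.

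What is missing are precisely the two substantive steps of the paper's proof. First, one must show $\PP(\fz_{-1}(e))\subset\cD$ for the \emph{whole} centralizer, not just its intersection with $\ft_{-1}$: after reducing to semisimple $x\in\fz_{-1}(e)$ by Jordan decomposition, the paper proves that $\alpha_{-1}^\perp$ is a Cartan subspace of the graded reductive algebra $\fz(e)$ (if it were not, a larger Cartan subspace would generate a Cartan subalgebra of $\fg$ commuting with the nilpotent $e$, a contradiction), so every such $x$ is $Z_0(e)$-conjugate into $\alpha_{-1}^\perp$ and $D(x)=0$ by $G_0$-invariance. Second, one must show $(\overline{G_0[e]})^\vee$ is a hypersurface; this is exactly where non-semisimple elements of $\fz_{-1}(e)$ enter: the paper takes a regular nilpotent $f$ of $\fs_{[\alpha]}$ lying in $\fz_{-1}(e)$, forms the affine slice $\hat\fz=f+\alpha_{-1}^\perp$, and shows its normalizer in $\fg_0$ is trivial (it sits inside the Cartan $\fs_{[\alpha]}\cap\fg_0$, which cannot centralize the regular nilpotent $f$), so $\dim G_0(\hat\fz)=\dim\fg_0+\dim\fc-1=\dim\fg_{-1}-1$; only then does irreducibility of $\cD$ force equality. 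Your proposal would need both of these arguments added, and the claimed identification of $\cD$ with the saturation of the hyperplane removed.
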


\begin{remark}
A posteriori $G_0[e]$ does not depend on the choice of $e$, but only on the choice of $\alpha$. Indeed $\cD$ only depends on the $W_\theta$-orbit of $\alpha_{-1}$, and $\cD^\vee=((\overline{G_0[e]})^\vee)^\vee=\overline{G_0[e]}$.
\end{remark}

\begin{remark}
\label{rmk_two_orbits}
Recall that Tevelev's result actuallyholds for two orbit closures (in the non simply laced case) inside the adjoint representation. The first one is the (closed) orbit of the root spaces associated to long roots; the second one, in the non simply laced case, is the orbit closure of the root spaces associated to short roots. Similarly, Theorem \ref{thm_main} holds in general for two orbit closures: the one associated to $\alpha_l$ and the one associated to $\alpha_s$ (at least when there are two conjugacy classes of reflections in the little Weyl group; otherwise there is only one conjugacy class $\alpha=\alpha_l$, see Table \ref{tab_degree_D}). As we will show in Proposition \ref{prop_closed_orbit}, the orbit closure associated to $\alpha_l$ gives the closed orbit in $\fg_1$. 
\end{remark}

In view of Remark \ref{rmk_two_orbits}, we will denote by $\cD_l$ (respectively $\cD_s$) the dual of the $G_0$-orbit closure $\overline{G_0[e]}$ with $e\in \fs_{[\alpha_l]}$ (resp. $e\in \fs_{[\alpha_s]}$). As a consequence we can refine \cite[Theorem 3.1]{HO}. 

\begin{coro} 
Let $\cD_{\fg,l}\subset\PP(\fg)$ (respectively $\cD_{\fg,s}\subset\PP(\fg)$) denote the discriminant of $\fg$ (resp. the dual of the $G$-orbit closure of vectors in short root spaces). The intersection of $\cD_{\fg,l}$ (resp. $\cD_{\fg,s}$) with $\PP(\fg_{-1})$ is $(m-1)\cD_l$ (resp. $(m-1)\cD_s$). 
\end{coro}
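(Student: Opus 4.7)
The plan is to compare $D_{\fg,l}|_{\fg_{-1}}$ with $D_l^{m-1}$ as polynomials on the Cartan subspace $\fc_{-1}:=\ft_{-1}\subset\fg_{-1}$. Vinberg's Chevalley restriction theorem (Theorem~\ref{thm_chevalley_vinberg}), which applies equally to $\fg_{-1}$, guarantees that any $G_0$-invariant polynomial on $\fg_{-1}$ is determined by its restriction to $\fc_{-1}$; so it suffices to verify the equality on $\fc_{-1}$ up to a nonzero scalar. Since the $\theta$-corank vanishes, $\ft$ is a Cartan subalgebra of $\fg$, and Tevelev's formula (Theorem~\ref{tevelev}) yields immediately
\[
D_{\fg,l}|_{\ft_{-1}}\;=\;\prod_{\alpha\in R(\fg)_l}\alpha_{-1}.
\]

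The main step is to reorganise this product according to the $\theta$-subsystems of Section~\ref{sec_4}. Each $\overline{\alpha}\in R(\fc)_l$ indexes an $A_{m-1}$-subsystem $\fs_{[\alpha]}\cap R(\fg)$ of $m(m-1)$ long roots, and by Lemma~\ref{lem_homogeneous_component} applied in grade $-1$ every such root $\beta$ satisfies $\beta_{-1}\in\CC\alpha_{-1}$. Writing the $A_{m-1}$-roots as partial sums $\theta^i\alpha+\cdots+\theta^j\alpha$, restricting them via $\theta^i\alpha|_{\ft_{-1}}=\xi^{-i}\alpha_{-1}$, and using $1+\xi+\cdots+\xi^{m-1}=0$, a direct computation will give
\[
\prod_{\beta\in\fs_{[\alpha]}\cap R(\fg)}\beta_{-1}\;=\;\varepsilon_m\,\alpha_{-1}^{m(m-1)}
\]
for a nonzero universal constant $\varepsilon_m$ depending only on $m$ (one checks $\varepsilon_2=\varepsilon_3=-1$, and similarly $\varepsilon_5$ is a nonzero cyclotomic number). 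Multiplying over all $\overline{\alpha}\in R(\fc)_l$ then gives $D_{\fg,l}|_{\ft_{-1}}=\varepsilon_m^{|R(\fc)_l|}\prod_{\overline{\alpha}\in R(\fc)_l}\alpha_{-1}^{m(m-1)}$.

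For the other side, Propositions~\ref{complex_reflections_from_theta_roots} and~\ref{prop_refl} show that $w_{\overline{\alpha}}$ acts on $\CC\alpha_{-1}$ by a primitive $m$-th root of unity and that the full $W_\theta$-orbit of $\overline{\alpha_l}$ equals $R(\fc)_l$. Consequently the orbit $W_\theta\cdot\alpha_{-1}$ meets each line $\CC\beta_{-1}$, $\overline{\beta}\in R(\fc)_l$, in the $m$ points $\{\xi^{-i}\beta_{-1}:0\le i\le m-1\}$, and hence
\[
D_l\;=\;\eta\prod_{\overline{\beta}\in R(\fc)_l}\beta_{-1}^{m},\qquad D_l^{m-1}\;=\;\eta^{m-1}\prod_{\overline{\beta}\in R(\fc)_l}\beta_{-1}^{m(m-1)},
\]
for some nonzero scalar $\eta$. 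This matches the formula obtained for $D_{\fg,l}|_{\ft_{-1}}$ up to a nonzero scalar, which is precisely the divisorial identity $\cD_{\fg,l}\cap\PP(\fg_{-1})=(m-1)\,\cD_l$. The short-root statement is proved identically, starting from the short-root version of Tevelev's formula recorded in the remark following Theorem~\ref{tevelev}.

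The main obstacle I expect is the combinatorial bookkeeping of the second paragraph: one must check that $R(\fg)_l$ really is partitioned by long $\theta$-subsystems (and similarly for $R(\fg)_s$), that long (resp.\ short) reflections in $W_\theta$ form a single conjugacy class identified with $R(\fc)_l$ (resp.\ $R(\fc)_s$) --- which follows from the Shephard--Todd data underlying Table~\ref{tab_exc_graded_vinberg} --- and that the cyclotomic scalars $\varepsilon_m$ and $\eta$ never vanish. As a sanity check one can note that the degree identity $\deg D_{\fg,l}=|R(\fc)_l|\cdot m(m-1)=(m-1)\deg D_l$ is automatic from the formulas for the degrees, confirming that the multiplicity $m-1$ is the right one.
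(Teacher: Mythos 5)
Your proposal is correct and is, at bottom, the same argument the paper gives: restrict to $\fc_{-1}$ via Tevelev's formula and Vinberg's restriction theorem, use the fact that $W_\theta=W^\theta\subset W$ (equivalently, the conjugacy of the long, resp.\ short, reflections) to identify the relevant $W_\theta$-orbit with restrictions of roots of the fixed length, and exploit the $A_{m-1}$-structure of the $\theta$-subsystems to get the multiplicities. The difference is purely in the bookkeeping: the paper groups the factors of $\prod_{\beta\in R(\fg)_\epsilon}\beta_{-1}$ by their common restriction value and pins the exponent $m-1$ by combining the subsystem count with the case-by-case degree identity $\deg D_{\fg,\epsilon}=(m-1)\deg D_\epsilon$, whereas you group by $\theta$-subsystem and get the exponent $m(m-1)$ per reflection hyperplane directly from the nonvanishing of the consecutive sums $\xi^{-i}+\cdots+\xi^{-j}$ (for $m$ prime this is immediate, and it also follows from Lemma \ref{lem_homogeneous_component}'s companion Lemma 4.1 applied to each root of the class), which removes the need for the explicit degree check. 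One point to state more carefully: your claim that $W_\theta\cdot\alpha_{-1}$ meets each long line in \emph{exactly} the $m$ points $\xi^{-i}\beta_{-1}$ is not automatic for the literal vector orbit when $m$ is odd, since $W_\theta$ may contain $-\id$ (its center does for the $16$th and $32$nd groups, for instance), in which case the orbit also contains $-\xi^{i}\beta_{-1}$. What your comparison really needs, and what the paper itself uses when it records $\deg D_\epsilon=m\times(\text{number of hyperplanes in the class})$ in Section \ref{sec_degree_D}, is that the restriction of $D_\epsilon$ to $\fc_{-1}$ is, up to a nonzero scalar, $\prod_{\overline{\beta}}\beta_{-1}^{\,m}$ taken over the hyperplanes in the relevant conjugacy class; with that (shared) normalization your degree bookkeeping $\deg D_{\fg,\epsilon}=|R(\fc)_\epsilon|\,m(m-1)=(m-1)\deg D_\epsilon$ is exactly right and the two sides match as you say. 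So this is a convention to make explicit rather than a gap, and apart from it your proof is a faithful, slightly more self-contained version of the paper's.
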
 

\proof 
Let $\epsilon\in\{l,s\}$. From Tevelev's formula we know that $D_{\fg,\epsilon} = \prod_{u\in W(\alpha_\epsilon)}u$, while $D_{\epsilon} = \prod_{u\in W_\theta((\alpha_\epsilon)_{-1})}u$. Therefore $D_{\fg,\epsilon}|_{\fc_{-1}} = \prod_{u\in W(\alpha_\epsilon)} u_{-1}$.

By \cite[Remark after Proposition 8]{vinberg}, the little Weyl group $W_\theta$ is equal to $W^\theta$ since the $\theta$-rank
is maximal. Thus $W_\theta$ is contained in the Weyl group $W$, and any element $u\in W_\theta((\alpha_\epsilon)_{-1})$ is equal to $\beta_{-1}$ for a certain root $\beta$. Moreover, all restrictions $\beta_{-1}$ for all roots $\beta$ of fixed length are in the same $W_\theta$-orbit since the corresponding reflections are all conjugate  in $W_\theta$ (see Section \ref{sec_degree_D}). This implies that 
$$
D_{\fg,\epsilon}|_{\fc_{-1}} = \prod_{u\in W_\theta((\alpha_\epsilon)_{-1})} \Large(\prod_{\substack{\beta \in W(\alpha_\epsilon)\\ \beta|_{\fc_{-1}}=u}}u\Large)=\prod_{u\in  W_\theta((\alpha_\epsilon)_{-1})} u^{|\{  \beta \in W(\alpha_\epsilon)\mid \,\, \beta|_{\fc_{-1}}=u \}|}.
$$
In order to conclude we need to show that, for each $u\in W_\theta((\alpha_\epsilon)_{-1})$, the cardinality $|\{  \beta \in W(\alpha_\epsilon)\mid \,\, \beta|_{\fc_{-1}}=u \}|$ is at least $m-1$; since $\deg(D_{\fg,\epsilon})=(m-1)\deg(D_\epsilon)$ by an explicit check, this cardinality is constant and equal to $m-1$, and the result follows. So, let us fix $u=\beta_{-1}$ for a certain root $\beta$ of length $\epsilon$. Then there are $m(m-1)$ roots in $\fs_{[\beta]}\cong \fsl_{m}$. These roots are partitioned in $m$ subsets of $m-1$ roots each, each subset being the set of roots $\gamma$ such that $\gamma|_{\fc_{-1}}=\theta^i(u)=\xi^i u$, for $i=0,\cdots,m-1$. So there are $m-1$ such roots $\gamma$ such that $\gamma|_{\fc_{-1}}=u$.\qed 

\medskip 

\begin{proof}[Proof of Theorem \ref{thm_main}]
The dual projective variety of $\overline{G_0[e]}\subset \PP(\fg_{1})$ is the closure 
\[
\overline{G_0 \PP(T_{G_0 e,e}^\perp)}=\overline{G_0 \PP(\fz_{-1}(e))}\subset \PP(\fg_{-1})=\PP(\fg_1)^\vee, 
\]
where we have used the fact that $\fg_{-1}$ and $\fg_1$ are dual under the Killing form. We will first show that $\PP(\fz_{-1}(e))\subset \cD$, then that $G_0 \PP(\fz_{-1}(e)))$ is a hypersurface in $\PP(\fg_{-1})$. Since $\cD$ is irreducible, the statement will follow.\medskip

\paragraph{\bf{Inclusion inside the invariant divisor.}} We want to show that, for any element $x\in\fz_{-1}(e)$, $D(x)=0$. Recall that for such an element $x$, if $x=x_s+x_n$ is its homogeneous Jordan decomposition in its semisimple part $x_s\in\fg_{-1}$ and its nilpotent part $x_n\in\fg_{-1}$, then $[x,e]=[x_s,e]=[x_n,e]=0$. Thus $D(x)=D(x_s)$, where $x_s\in \fz_{-1}(e)$ is semisimple. We are thus reduced to the case 
where $x\in\fz_{-1}(e)$ is semisimple. 

Clearly $x\in \ft_{-1}\cap \fz_{-1}(e)$ if and only if $x\in \Ker(\alpha)$, i.e. if and only if $\alpha(x)=\alpha_{-1}(x)=0$. Indeed, as already noticed in the proof of Proposition \ref{prop_refl}, each root $\beta$ in $\fs_{[\alpha]}$ satisfies $\beta_1=\alpha_1$, and in particular this holds for the root $\beta$ whose root space the vector $e$ belongs to. Thus the hyperplane $\alpha_{-1}^\perp\subset \ft_{-1}$ is equal to $\fz_{-1}(e)\cap \ft_{-1}$.

 In general, since $x$ is semisimple, $(\fz(e),\theta|_{\fz(e)})$ is a graded reductive Lie algebra; by Vinberg's theory, each Cartan subspace in $\fz_{-1}(e)$ is in the same conjugacy class under $Z_0(e):=\{g\in G_0\mid g(e)=e \}$. Let us check that $ \alpha_{-1}^\perp$ is a Cartan subspace of $\fz_{-1}(e)$. If this were not the case, there would exist $g\in Z_0(e)$ such that $\CC g(x) \oplus \alpha_{-1}^\perp$ is a Cartan subspace of $\fg$ inside $\fz_{-1}(e)$; then the Lie algebra $\tilde{\ft}$ associated to $\CC g(x) \oplus \alpha_{-1}^\perp$ would be a Cartan subalgebra of $\fg$ satisfying $[\tilde{\ft},e]=0$, which is a contradiction with the fact that $e$ is nilpotent. 
 
Since, as already remarked, all Cartan subspaces are conjugate  and $\alpha_{-1}^\perp$ is a Cartan subspace of $\fz_{-1}(e)$, there exists $g\in Z_0(e)$ such that $g(x)\in \alpha_{-1}^\perp$. Then $D(g(x))=0$, and by the $G_0$-invariance of $D$ we get $D(x)=0$. Therefore we can conclude that $\PP(\fz_{-1}(e))\subset \cD$.

\medskip
\paragraph{\bf{Dimension count.}} Now we compute  the dimension of $\PP(G_0(\fz_{-1}(e)))$. Since it is contained in $\cD$, in order to show 
that it is a divisor it is sufficient to show that 
$$\dim(G_0(\fz_{-1}(e)))\ge \dim(\fg_{-1})-1.$$
First notice that $\dim(\fg_{i})= \dim(\fg_{0})+\dim(\fc)$ for $i\neq 0$. Indeed, recall the homogeneous decomposition $\fg=\ft \oplus \bigoplus_{[\alpha]\in[R(\fg)]}\fg_{[\alpha]}$. On one hand $\ft=\bigoplus_{i\neq 0}\ft_i$
 with $\ft_i=\ft\cap \fg_i$ and $\dim(\ft_i)=\dim(\ft_1)=\dim(\fc)$. On the other hand, by Lemma \ref{lem_homogeneous_component} we know that $\fg_{[\alpha]}=\bigoplus_i (\fg_{[\alpha]}\cap \fg_i)$ with $\dim(\fg_{[\alpha]}\cap \fg_i)=1$. Hence the claim. Note moreover that the dimension of $\fg_0$ is equal to $|[R(\fg)]|=|R(\fg)|/m$.
 
We will bound $\dim(G_0(\fz_{-1}(e)))$ from below by computing $\dim(G_0(\hat{\fz}))$ for a certain subset $\hat{\fz}\subset \fz_{-1}(e)$. Consider the subalgebra $\fs_{[\alpha]}$, and $e\in \fs_{[\alpha]}\cap \fg_1$ a root vector. 

\begin{lemma}
$\fz_{-1}(e)\cap \fs_{[\alpha]}$ contains a regular nilpotent element $f$ of $\fs_{[\alpha]}$.
\end{lemma}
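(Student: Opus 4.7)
The plan is to make the statement entirely explicit inside the matrix model $\fs_{[\alpha]}\cong\fsl_m$. Fixing the standard basis of matrix units $E_{i,j}$, one identifies $\fs_{[\alpha],0}$ with the diagonal trace-zero matrices, and the $\ZZ_m$-grading described in Section~\ref{sec_homogeneous_decomposition} gives
\[
\fs_{[\alpha],1}=\langle E_{1,2},E_{2,3},\ldots,E_{m-1,m},E_{m,1}\rangle, \qquad \fs_{[\alpha],-1}=\langle E_{2,1},E_{3,2},\ldots,E_{m,m-1},E_{1,m}\rangle,
\]
with the $m$ weight spaces of $\fs_{[\alpha],1}$ being cyclically permuted by $\theta$ (whose restriction to $\fs_{[\alpha]}$ is inner by Proposition~\ref{complex_reflections_from_theta_roots}).

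Using that cyclic permutation symmetry together with the diagonal torus inside $G_0$, I would first normalize $e$ to a specific root vector, say $e=E_{1,2}$. Writing a generic element of $\fs_{[\alpha],-1}$ as $f=\sum_{k=1}^{m-1}c_k E_{k+1,k}+c_0 E_{1,m}$, the standard bracket in $\fsl_m$ immediately yields $[f,E_{1,2}]=c_1(E_{2,2}-E_{1,1})$, so the commutation condition $[f,e]=0$ cuts out the hyperplane $\{c_1=0\}$ inside $\fs_{[\alpha],-1}$.

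It remains to verify that this hyperplane contains regular nilpotent elements of $\fsl_m$, and for this I would simply exhibit one: take $f=E_{3,2}+E_{4,3}+\cdots+E_{m,m-1}+E_{1,m}$; this matrix sends the standard basis of $\CC^m$ via the single broken cycle $e_2\mapsto e_3\mapsto\cdots\mapsto e_m\mapsto e_1\mapsto 0$, so $e_2$ is a cyclic vector and $f$ is conjugate to a single $m\times m$ Jordan block. I do not expect any serious obstacle beyond this routine verification; the conceptual point is that removing a single arrow from the cyclic shift pattern on $\{1,\ldots,m\}$ still produces a linear chain of length $m$, so regularity is preserved generically on the constraint hyperplane. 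The case where $e$ lies in the ``affine'' weight space corresponding to $-(a_1+\cdots+a_{m-1})$ (i.e.\ $e$ proportional to $E_{m,1}$) is handled in exactly the same way by the cyclic symmetry, the constraint becoming $c_0=0$ instead.
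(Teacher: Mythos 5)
Your proof is correct and is essentially the paper's own argument written out in matrix coordinates: the paper relabels the simple roots so that $e$ spans the lowest root space and takes $f$ to be a sum of the negative simple root vectors, which is exactly your ``broken cycle'' $E_{3,2}+\cdots+E_{m,m-1}+E_{1,m}$ after a cyclic shift of indices. One inaccuracy to fix: $\theta$ does \emph{not} cyclically permute the weight spaces of $\fs_{[\alpha],1}$ --- it acts on all of $\fg_1$, hence on $\fs_{[\alpha],1}$, by the scalar $\xi$ and preserves each root space --- so the normalization of $e$ should instead be justified by the grading-preserving conjugation by the cyclic shift matrix (a rotation of the affine $A_{m-1}$ configuration), or simply dropped, since your computation works verbatim with shifted indices for $e$ in any of the $m$ weight spaces.
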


\proof Recall that a nilpotent element is regular if it belongs to the dense orbit of  the nilpotent cone. A typical example 
is a linear combination of the simple root vectors, with nonzero coefficients. 

In our situation, $\fs_{[\alpha]}\cong\fsl_m$, the subspace $\fs_{[\alpha]}\cap \fg_0$ is a Cartan subalgebra, and $\fs_{[\alpha]}\cap \fg_1=\bigoplus_{i=1}^{m-1}\fsl_{m,a_i} \oplus \fsl_{m,-\sum_i a_i}$ for $a_1,\ldots,a_{m-1}$ a simple root basis of $\fsl_m$. 

We can choose $e$ to be a root vector corresponding to minus the highest weight of $\fs_\alpha$. Indeed, we chose $e$ to belong to a root space with respect to $\fs_{[\alpha]}\cap \fg_0$, thus $e$ belongs to one of the $\fsl_{m,a_i}$'s or to $\fsl_{m,-\sum_i a_i}$; if for instance $e$ belongs to $\fsl_{m,a_j}$, then notice that $a_j$ is minus the highest root with respect to the root basis $a_1,\cdots,\hat{a_j},\ldots,a_{m-1},-\sum_i a_i$. 

In other words, we are choosing $0\neq e\in \fsl_{m,-\sum_i a_i}$, and then the subspace $\bigoplus_{i=1}^{m-1}\fsl_{m,-a_i}\subset \fs_{[\alpha]}\cap \fz_{-1}(e)$ obviously contains a regular nilpotent element.  \qed 

\medskip Such an $f$ being chosen, consider the affine space
\[
\hat{\fz}:=\{ x= f + y\mid y\in \alpha_{-1}^\perp \subset \fc \}.
\]
Clearly $\hat{\fz}\subset \fz_{-1}(e)$, $\dim(\hat{\fz})=\dim(\alpha_{-1}^\perp)=\dim(\fc)-1$ and $x=f+y$ is the Jordan decomposition of $x$ (with $y$ semisimple and $f$ nilpotent; notice that $f$ and $y$ commute since $f\in \fg_{[\alpha]}$). Then 
\[
\dim(G_0(\hat{\fz}))
=\dim(\fg_0)+\dim(\hat{\fz})-\dim(N_0(\hat{\fz}))
=\dim(\fg_{-1})-1-\dim(N_0(\hat{\fz})),
\]
where  $N_0(\hat{\fz}):= \{g\in G_0\mid g(\hat{\fz})=\hat{\fz} \}$ has Lie algebra $\fn_0(\hat{\fz}):= \{g\in \fg_0\mid [g,\hat{\fz}]\subset \hat{\fz} \}$. So in order to prove the theorem there remains to show that $\fn_0(\hat{\fz})=0$. 

Consider $g\in N_0(\hat{\fz})$. Since $g$ must preserve the Jordan decomposition of any $x=f+y\in \hat{\fz}$, we need $g(f)=f$ and $g(y)\in \alpha_{-1}^\perp$. In other words $N_0(\hat{\fz})$ is the centralizer of $f$ inside the normalizer $N_0(\alpha^\perp_{-1})$. The Lie algebra of this normalizer is just $\fn(\alpha_{-1}^\perp)\cap \fg_0= (\ft+\fs_{[\alpha]})\cap \fg_0=\fs_{[\alpha]}\cap \fg_0$, where $\fn(\alpha_{-1}^\perp):=\{ g\in \fg\mid [g,\alpha_{-1}^\perp]\subset \alpha_{-1}^\perp  \}$. However $\fs_{[\alpha]}\cap \fg_0$ is a Cartan subalgebra of $\fs_{[\alpha]}\cong \fsl_m$. Since $f$ is regular nilpotent inside $\fs_{[\alpha]}$, it is not centralized by any semisimple element and  we can 
finally conclude that $\fn_0(\hat{\fz})=0$.
\end{proof}

\section{Explicit examples}
\label{sec_6}

\subsection{Closed orbits}

We will describe the varieties to which the theorem applies for the graded Lie algebras appearing in Table \ref{tab_exc_graded_vinberg}. 
So $\fg$ will be a simple graded Lie algebra with $\theta$-$\corank=0$ and order $m=2,3$ or $5$, such that $\rho_1(G_0)$ is semisimple. Notice however that the hypothesis \emph{$\rho_1(G_0)$ semisimple} is not strictly necessary in order to apply the theorem.
\medskip

\paragraph{\bf{List of closed orbits.}} In all the cases, the $G_0$-representation $\fg_1$ is irreducible and there is a single closed (or minimal) orbit inside $\PP(\fg_1)$. In Table \ref{tab_orbits} we report the list of these closed orbits. We denoted by $\OG$ the orthogonal Grassmannian of isotropic subspaces with respect to a non-degenerate symmetric $2$-form (if the subspaces have maximal dimension and the   ambient vector space is even dimensional, a subscript $\pm$ will denote one of the two connected components of the family of such subspaces). Moreover $v_i$  denotes the $i$-th Veronese embedding.

\begin{table}[]
\centering
\caption{Closed $G_0$-orbits inside $\PP(\fg_1)$ for the graded Lie algebras of Table \ref{tab_exc_graded_vinberg} and their codegrees}
\label{tab_orbits}
\begin{tabular}{c|c|c|c|c|c}
Type of $\fg$ & Type of $\rho_1(G_0)$  & $\fg_1$ & Orbit & $\deg(\cD)$ \\
\hline
$B_{2p}^{(1)}$ & $D_p\times B_p$ & $\CC^{2p}\otimes \CC^{2p+1} $  & $\QQ^{2p-2}\times \QQ^{2p-1}$ & $4p(2p-1)$ \\
$B_{2p-1}^{(1)}$ & $D_p\times B_{p-1}$ & $\CC^{2p}\otimes \CC^{2p-1} $  & $\QQ^{2p-2}\times \QQ^{2p-3}$ & $4(p-1)(2p-1)$ \\
$D_{2p}^{(1)}$ & $D_p\times D_p$ & $\CC^{2p}\otimes \CC^{2p} $  & $\QQ^{2p-2}\times \QQ^{2p-2}$ & $4p(2p-1)$ \\
$A_{2n-2}^{(2)}$ & $B_{n-1}$ & $S^{\langle 2 \rangle}\CC^{2n-1} $  & $v_2(\QQ^{2n-3})$ & $2(n-1)(2n-1)$ \\
$A_{2n-3}^{(2)}$ & $D_{n-1}$ & $S^{\langle 2 \rangle}\CC^{2n-2} $  & $v_2(\QQ^{2n-4})$ & $2(n-1)(2n-3)$ \\
$D_{2p+1}^{(2)}$ & $B_p\times B_p$ & $\CC^{2p+1}\otimes \CC^{2p+1} $  & $\QQ^{2p-1}\times \QQ^{2p-1}$ & $4p(2p+1)$ \\
 $E_6^{(1)}$ & $A_2\times A_2 \times A_2$ & $\CC^3\otimes \CC^3 \otimes \CC^3$ & $\PP^2\times \PP^2\times \PP^2$ & $36$ \\
 $E_7^{(1)}$ & $A_7$ & $\wedge^4\CC^8$ & $G(4,8)$ & $126$  \\
 $E_8^{(1)}$ & $D_{8}$ & $S^+_{16}$ & $\OG(8,16)_+$ &  $240$ \\
 $E_8^{(1)}$ & $A_8$ & $\wedge^3 \CC^9$ & $G(3,9)$ & $120$  \\
 $E_8^{(1)}$ & $A_4\times A_4$ & $\CC^5\otimes \wedge^2 \CC^5$ & $\PP^4\times G(2,5)$ & $60$ \\
 $F_4^{(1)}$ & $B_3\times A_1$ & $S_7\otimes \CC^2$ & $\OG(3,7)\times \PP^1$ & $24$ \\
 $F_4^{(1)}$ & $A_2\times A_2$ & $\Sym^2(\CC^3)\otimes \CC^3$ & $v_2(\PP^2)\times \PP^2$ & $12$ \\
 $G_2^{(1)}$ & $A_1\times A_1$ & $\Sym^3(\CC^2)\otimes \CC^2$ & $v_3(\PP^1)\times \PP^1$ & $6$ \\
 $E_6^{(2)}$ & $C_4$ & $\wedge^{\langle 4 \rangle}\CC^8$ & $LG(4,8)$ & $72$ \\
 $D_4^{(3)}$ & $A_2$ & $\Sym^3(\CC^3)$ & $v_3(\PP^2)$ & $12$ \\
 \end{tabular}
 \end{table}
\medskip

\paragraph{\bf{Orbits description.}} In every $G_0$-representation of Table \ref{tab_exc_graded_vinberg} there is a unique closed orbit. 
We want now to identify it with one of the orbits whose dual is described in Theorem \ref{thm_main} (see also Remark \ref{rmk_two_orbits}). We will use an argument 
already present in the proof of \cite[Proposition 2]{vinberg}. We keep the notations of Section \ref{sec_5}.

\begin{prop}
\label{prop_closed_orbit}
Let $\fg$ be an exceptional simple graded Lie algebra with maximal $\theta$-rank and order $m=2,3$ or $5$, such that $\rho_1(G_0)$ is semisimple. 
Suppose that $\alpha$ is a long root. Let $e$ be a nonzero nilpotent element inside $\fs_{[\alpha]}\cap \fg_1$, contained in a root space for the Cartan subalgebra $\fs_{[\alpha]}\cap \fg_0$ of the simple Lie algebra $\fs_{[\alpha]}$. Then $G_0[e]\subset \PP(\fg_1)$ is the unique closed  $G_0$-orbit inside $\PP(\fg_1)$.
\end{prop}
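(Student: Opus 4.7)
The plan is to show that, for $\alpha$ long, the element $e$ can be identified (up to the $G_0$-action) with a highest weight vector of the irreducible $G_0$-representation $\fg_1$, relative to some Borel $B_0 \subset G_0$. Since $\fg_1$ is $G_0$-irreducible, the orbit of any highest weight line in $\PP(\fg_1)$ is the unique closed $G_0$-orbit, so this will suffice. Transitivity of $W_\theta$ on long $\theta$-roots (used already in Section \ref{sec_degree_D}) eliminates the dependence of the conclusion on the initial choice of $\alpha$ within its $W_\theta$-orbit.

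First, I would construct a $\theta$-stable Borel $\fb = \fh_{\fg} \oplus \fn^+$ of $\fg$: start with a Cartan $\fh_0$ of $\fg_0$ containing $\fs_{[\alpha]} \cap \fg_0$, extend it to a Cartan $\fh_{\fg}$ of $\fg$ (this extension is trivial in the inner case since $\fh_0$ is already a Cartan of $\fg$, and is by a subspace of some $\fg_i$, $i \ne 0$, in the outer case), then pick $\fb$ through $\fh_{\fg}$ so that $\theta(\fb) = \fb$. The restriction $\fb_0 := \fb \cap \fg_0$ is a Borel of $\fg_0$, and its nilradical $\fn_0^+ = \fn^+ \cap \fg_0$ is spanned by the $\theta$-averages $y_\gamma = \sum_i \theta^i(e_\gamma)$ over representatives $\gamma$ of $\theta$-orbits of positive roots of $\fg$. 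By $W_\theta$-transitivity I may assume that a representative of $[\alpha] \in R(\fc)$ is the root of $\fg$ whose corresponding line $\fg_{[\alpha],1}$ gives the highest weight of $\fg_1$ as a $\fg_0$-module, and by Lemma \ref{lem_homogeneous_component} I may take $e$ to be the corresponding $\theta$-averaged generator, of the form $e = \sum_j \xi^{-j}\theta^j(e_{\tilde\alpha})$ for a suitable root vector $e_{\tilde\alpha}$ in the $A_{m-1}$-subsystem $\fs_{[\alpha]}$.

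The heart of the proof is then the verification that $\fn_0^+ \cdot e = 0$. Expanding $[y_\gamma, e]$ using the bilinearity of the bracket one obtains a double sum of terms $[e_{\theta^i(\gamma)}, e_{\theta^j(\tilde\alpha)}]$, each proportional to a root vector for $\theta^i(\gamma) + \theta^j(\tilde\alpha)$ when that sum is a root. The point is that, for the extremal choice of $\tilde\alpha$ made in the previous step together with the $\theta$-invariance of $\fn^+$, none of these sums is a root; hence each bracket vanishes and $[y_\gamma, e] = 0$. Since $e$ is also an $\fh_0$-eigenvector (being a $\theta$-eigencombination of root vectors whose roots of $\fg$ restrict to the same weight on $\fh_0$), $e$ is a highest weight vector of $\fg_1$ for $\fb_0$, and $G_0 \cdot [e]$ is the unique closed $G_0$-orbit in $\PP(\fg_1)$.

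The main obstacle is the vanishing claim in the third paragraph, which must be handled with care in the outer cases: the $\theta$-translates $\theta^j(\tilde\alpha)$ are not themselves the highest root of $\fg$, so one cannot invoke the maximality of $\tilde\alpha$ directly. Instead one uses the $\theta$-invariance of the chosen positive system to argue that each $\theta^j(\tilde\alpha)$ is maximal among the positive roots whose $\fh_0$-restriction equals the fixed highest weight of $\fg_1$, which is exactly what is needed to rule out $\theta^i(\gamma) + \theta^j(\tilde\alpha)$ from being a root. Alternatively one can dispense with this general argument by invoking the explicit case analysis from Table \ref{tab_exc_graded_vinberg}, comparing the orbit $G_0 \cdot [e]$ with the closed orbits identified in Table \ref{tab_orbits} via their (well-known) stabilizers.
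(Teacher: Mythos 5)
Your overall strategy (exhibit $e$, up to $G_0$-conjugacy, as a highest weight vector of the irreducible $G_0$-module $\fg_1$) is reasonable in principle, but the two steps that carry all the weight are not established, and one of them rests on a confusion of Cartan subalgebras. Lemma \ref{lem_homogeneous_component} and the $A_{m-1}$-subsystems are defined with respect to the $\theta$-torus Cartan $\ft$, which in the maximal $\theta$-rank case meets $\fg_0$ trivially; the $\theta$-eigenprojection $\sum_j\xi^{-j}\theta^j(e_{\tilde\alpha})$ of an $\ft$-root vector spans the line $\fg_{[\tilde\alpha],1}$, and such elements are in general \emph{not} nilpotent (already for $m=2$, $\fg_{[\alpha],1}$ is spanned by $X_\alpha-X_{-\alpha}$, a semisimple element of $\fs_{[\alpha]}\cong\fsl_2$), so they cannot be the root vector $e$ of $\fs_{[\alpha]}$ relative to the Cartan $\fs_{[\alpha]}\cap\fg_0$. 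If instead you mean root vectors for your $\theta$-stable Cartan $\fh_{\fg}$, then Lemma \ref{lem_homogeneous_component} does not apply and the link between $e$, the class $\overline{\alpha}$, and the length of $\alpha$ is no longer in place. Similarly, $W_\theta$ acts on the Cartan subspace $\fc$, not on the $\fh_0$-weights of $\fg_1$, so ``transitivity on long $\theta$-roots'' does not allow you to assume that $[\alpha]$ is the class whose line realizes the highest weight: that is essentially the content of the proposition, since for non-simply-laced $\fg$ both the long and the short class produce admissible elements $e$, and only the long one lies in the closed orbit. Finally, the vanishing $[\fn_0^+,e]=0$, which you yourself call the main obstacle, is only asserted; the claimed ``maximality of $\theta^j(\tilde\alpha)$ among positive roots restricting to the highest weight'' is equivalent to $e$ being a highest weight vector, i.e.\ to what is to be proved, and the alternative via Tables \ref{tab_exc_graded_vinberg} and \ref{tab_orbits} would be a case-by-case verification that you do not carry out.

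For comparison, the paper's argument avoids any highest-weight computation. From $[\fg,e]\cap\fg_1=[\fg_0,e]$ one gets, for every $v\in Ge\cap\fg_1$, the equality $T_{G_0v,v}=T_{Ge\cap\fg_1,v}$, so $Ge\cap\fg_1$ is smooth and each of its irreducible components is a single $G_0$-orbit. Because $\alpha$ is long, $e$ is a long-root vector of $\fg$, so $G[e]\subset\PP(\fg)$ is the adjoint variety, which is closed; hence $G[e]\cap\PP(\fg_1)$ is closed, each of its components is a closed $G_0$-orbit, and by irreducibility of $\fg_1$ there is only one such orbit, which must therefore be $G_0[e]$. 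If you want to salvage your approach, you would need an honest proof that the $\fh_0$-highest weight line of $\fg_1$ sits inside $\fs_{[\beta]}$ for a \emph{long} class $\overline{\beta}$ and that your $e$ is $G_0$-conjugate to it; as it stands, that is precisely the gap.
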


\begin{proof}

It is clear that for any element $e\in\fg_1$, $[\fg,e]\cap \fg_1=[\fg_0,e]$. Let $v\in G e \cap \fg_1$. The tangent space at $v$ satisfies $T_{G e \cap \fg_1,v}\subset [\fg,v]\cap \fg_1=[\fg_0,v]=T_{G_0 v,v}$. Since $G_0 v\subset G v \cap \fg_1=G e \cap \fg_1$, we deduce that in fact 
$T_{G_0 v,v}= T_{G e \cap \fg_1,v}$. Since this holds for any point $v\in G e \cap \fg_1$, we conclude that $G e \cap \fg_1$ is smooth 
and that each irreducible component of $G e \cap \fg_1$ is a $G_0$-orbit. 

Since $e$ belongs to the root space corresponding to a long root of $R(\fg)$, the orbit $G[e]\subset \PP(\fg)$ is the closed orbit of $\fg$,
namely the adjoint variety. As a consequence, the intersection 
$G[e]\cap \PP(\fg_1)$ is closed inside $\PP(\fg_1)$, and each of its irreducible components 
is also closed. But we have just seen that each of these components is a $G_0$-orbit, and since there is a unique such closed $G_0$-orbit
inside $\PP(\fg_1)$, it has to coincide with $G[e]\cap \PP(\fg_1)$, and with $G_0[e]$ as well. 
\end{proof}

In Table \ref{tab_orbits} we have reported the degree of $\cD$ corresponding to the choice of a long root (recall from Section \ref{sec_degree_D} that under the hypothesis we have made, we have defined at most two invariant divisors for each graded Lie algebra, each one corresponding to long or - possibly - short roots); by the previous Proposition and Theorem \ref{thm_main}, $\cD$ is the dual variety of the closed orbit in $\PP(\fg_1)$.

\medskip

\subsection{Classical cases} 

The classification of classical graded simple Lie algebras involves the classical affine Dynkin diagrams plus the Kac 
diagrams of type $A^{(2)}_{n}$ and $D^{(2)}_{n}$. The coefficients of these diagrams are either $m=2$ or $m=4$. We have 
excluded the latter case from our considerations. So under the hypothesis that $\rho_1(G_0)$ is semisimple 
there only remains $\ZZ_ 2$-gradings of the following type:
\[
\fsl_{2n}=\fsp_{2n}\oplus \wedge^{\langle 2\rangle}\CC^{2n}, \qquad 
\fsl_{2n}=\fso_{2n}\oplus S^{\langle 2\rangle}\CC^{2n}
\]
and, coming either from $B^{(1)}_{n}$, $D^{(1)}_{n}$ or $D^{(2)}_{n}$ according to the parity of 
$a$ and $b$, 
\[
\fso_{a+b}=\fso_a\times\fso_b\oplus (\CC^a\otimes\CC^b).
\]
In the latter case, it is shown in \cite{helgason} that the associated $\theta$-rank is equal to $\min(a,b)$. We have seen that the main theorem applies when $a=b$ or $a=b\pm 1$ (for such $a,b$ one can check that $\theta$-$\corank$=0), giving as codegree of $\QQ^{a-2}\times \QQ^{b-2}$ the integer $4{\min(a,b)\choose 2}$. We expect that a similar statement should hold for any $a,b$, that is
$$\deg\Big( (\QQ^{a-2}\times\QQ^{b-2})^\vee\Big)=4\binom{\min(a,b)}{2}.$$
One should be able to prove this statement by some different argument. 
Notice that by the Katz-Kleiman formula, this codegree is equal to four times the coefficient of $x^ay^b$ in the Taylor expansion of 
$$\frac{(1-x)^{a+2}(1-y)^{b+2}}{(1-2x)(1-x-y)^2(1-2y)}.$$

\medskip

\subsection{Counterexamples}
\label{sec_counterexamples}
The reader may wonder whether the hypothesis that $\rho_1(G_0)$ is semisimple is really important. 
The simplest example for which it is not fulfilled is, with $m=2$, given by the gradings of type 
$$\fsl_{a+b}=\fsl_a\times\fsl_b\times\CC\oplus (\CC^a\otimes\CC^b)\oplus (\CC^a\otimes\CC^b)^*.$$
Here $\fg_1=(\CC^a\otimes\CC^b)\oplus (\CC^a\otimes\CC^b)^*$ is not irreducible. Moreover the projectivization 
$\PP(\fg_1)$ contains two closed orbits, isomorphic to $\PP^{a-1}\times\PP^{b-1}$, so that their projective 
dual varieties (when considered in their linear spans) are not hypersurfaces in general (only for $a=b$).
In other words, everything goes wrong in that extremely simple example. 

\medskip
The reader may also wonder if we were right to restrict to the cases where $m=2,3$ or $5$. In the exceptional
cases there are indeed a few examples with $m=4$ and $m=6$ which we may briefly discuss. 
Assuming as before that 
$\rho_1(G_0)$ is semisimple there is only one case with $m=6$ to consider, which corresponds to the triple 
node of the affine Dynkin diagram $E_8^{(1)}$. Since the three arms of this diagram have lengths $1,2,5$, 
the unique closed orbit in $\PP(\fg_1)$ is $\PP^1\times\PP^2\times\PP^5$, in its Segre embedding. 
But for a Segre product of projective spaces $\PP^{n_1}\times\PP^{n_2}\times\cdots\times \PP^{n_r}$, 
with $n_1\le n_2\le\cdots\le n_r$, the projective dual is a hypersurface if and only if $n_r\le n_1+n_2+\cdots +n_{r-1}$
\cite[Corollary 5.10]{GKZ}. In particular the projective dual of $\PP^1\times\PP^2\times\PP^5$ is not a hypersurface!

\smallskip 
Another interesting case with $m=4$ is again attached to $E_8^{(1)}$, this time to the second node on its long arm,
starting from the triple node. In this case the closed orbit in $\PP(\fg_1)$ is $OG(5,10)_+\times\PP^3$. Even though in this case the $\theta$-$\corank$ vanishes, a computation shows that: 

\begin{prop}
The projective dual of $OG(5,10)_+\times\PP^3$ is not a hypersurface.
\end{prop}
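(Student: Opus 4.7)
The plan is to compute $\dim (X \times Y)^\vee$ for $X = \OG(5,10)_+$ and $Y = \PP^3$ and show it is strictly less than $62$. Write $X \subset \PP(S^+_{10})$, so that $X \times Y \subset \PP(S^+_{10}\otimes \CC^4)\cong \PP^{63}$. A hyperplane $\phi \in (S^+_{10}\otimes \CC^4)^*$ corresponds to a pair of mutually transposed linear maps $\phi_1:S^+_{10}\to (\CC^4)^*$ and $\phi_2:\CC^4\to S^-_{10}$. Because $\PP^3$ is linearly embedded (so its affine tangent cone equals $\CC^4$ at every point), I would first verify that $\phi$ is tangent to $X\times Y$ at $(x,y)$ if and only if $\phi_1(x)=0$ and $\phi_2(y)\in N^*_{x,X}$, the five-dimensional conormal space of $X$ at $x$.

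Next I would reduce tangency to a condition on $X^\vee$ alone. For a generic $\phi$ with $\rank \phi_1 = 4$, set $K:=\Ker\phi_1$ (dimension $12$) and $\Pi:=\phi_2(\CC^4)=K^\perp$ (dimension $4$ in $S^-_{10}$); tangency becomes the existence of $x\in X\cap \PP(K)$ with $\Pi\cap N^*_{x,X}\neq 0$. Since the defect of $X$ is $4$, each $\PP(N^*_{x,X})\cong\PP^4$ sits in $X^\vee=\OG(5,10)_-\subset\PP(S^-_{10})$, and these $\PP^4$'s sweep out $X^\vee$. I would then show that tangency is generically equivalent to $\PP(\Pi)\cap X^\vee\neq\emptyset$. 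The forward direction is immediate: take $\xi=\phi_2(y)\in\Pi\cap N^*_{x,X}\subset\Pi\cap X^\vee$. For the converse, given $\xi\in \PP(\Pi)\cap X^\vee$, the contact locus $\Lambda_\xi\subset X$ is a $\PP^4$ lying in $\PP(\xi^\perp)=\PP^{14}$; likewise $\PP(\Pi^\perp)\subset \PP(\xi^\perp)$ is a $\PP^{11}$, and since $4+11>14$, the intersection $\Lambda_\xi\cap\PP(\Pi^\perp)$ is non-empty (in fact a $\PP^1$), providing the required $x\in\PP(K)$.

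Finally I would compute the dimension of
\[
\mathcal Z:=\{\Pi\in G(4,S^-_{10}):\PP(\Pi)\cap X^\vee\neq\emptyset\}
\]
via the incidence $\{(\xi,\Pi):\xi\in\Pi\cap X^\vee\}$: the fibers over $\xi\in X^\vee$ (dimension $10$) are the Schubert subvarieties of $\Pi$'s through $\xi$, of dimension $36$; the projection to the $\Pi$-factor is generically finite because $3+10<15$ forces a transverse $\PP(\Pi)\cap X^\vee$ to be zero-dimensional when non-empty. Hence $\dim\mathcal Z=46$. The map $\phi\mapsto\Pi$ on the rank-$4$ locus of $\PP^{63}$ is a submersion with $15$-dimensional fibers ($\PGL(4)$-orbits of isomorphisms $\CC^4\to\Pi$), so
\[
\dim(X\times Y)^\vee=46+15=61<62,
\]
and irreducibility of the dual of an irreducible variety guarantees this is the true dimension. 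Thus $(X\times Y)^\vee$ has codimension at least $2$ in $\PP^{63}$ and is not a hypersurface. The crux of the argument is the converse in the second step, which reduces to the numerical inequality $4+11>14$ inside $\PP(\xi^\perp)$; this is a direct reflection of the defect of $\OG(5,10)_\pm$ being exactly $4$, and without it the equivalence would fail.
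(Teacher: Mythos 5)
Your argument is correct, but it is genuinely different from the paper's. The paper disposes of this case by a Chern class computation: it applies the Katz--Kleiman formula in the form $\deg (X\times\PP^k)^\vee=(k+1)\int_X\lambda^k c(\Omega_X)(1-\lambda)^{-k-2}$, pushes the integral from $\OG(5,10)_+$ to $G(5,10)$ via the class $2^5\sigma_{\delta(6)}$, and observes that the resulting Schubert-calculus sum vanishes, which by the Katz--Kleiman convention means the dual is not a hypersurface. You instead use the classical geometric input that $\OG(5,10)_+\subset\PP^{15}$ is dual defective with defect $4$ (its dual being the other spinor variety, smooth of dimension $10$), translate tangency of a hyperplane $\phi$ to the Segre product into the condition that the $4$-plane $\Pi=\phi_2(\CC^4)$ meets $X^\vee$, and bound the dimension of that locus by an incidence count; the linearity of the contact loci together with $4+11>14$ is exactly what makes the translation an equivalence, and this is the correct geometric explanation of the phenomenon (it is the same mechanism as the criterion for Segre products of projective spaces quoted from \cite{GKZ} for the $m=6$ case). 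Your route buys more than the paper's: it gives the actual dimension $61$, i.e.\ dual defect $1=\mathrm{def}(X)-\dim\PP^3$, whereas the vanishing of the Katz--Kleiman integral only certifies degeneracy; the paper's route is shorter and purely mechanical. Two points in your write-up deserve a word of justification: the converse step uses the contact locus through an \emph{arbitrary} point $\xi\in\PP(\Pi)\cap X^\vee$, so you should invoke the homogeneity of $X^\vee$ under $\Spin_{10}$ (or smoothness of $X^\vee$ plus biduality) to know that every contact locus, not just the generic one, is a linear $\PP^4$; and the generic finiteness of the incidence projection is not really needed, since the upper bound $\dim\mathcal Z\le 46$ already suffices, while the hyperplanes with $\rank\phi_1\le 3$ are handled cleanly by noting that this determinantal locus has dimension $50<62$, so an irreducible hypersurface could not lie in it and its generic point would have rank $4$, contradicting your bound.
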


\proof We apply the Katz-Kleiman formula for the degree (as a hypersurface) of the dual degree
of a projecive variety $X$. A straightforward consequence is that for any $k$, 
$$\deg (X\times\PP^k)^\vee=(k+1)\int_X\frac{\lambda^kc(\Omega_X)}{(1-\lambda)^{k+2}}=(k+1)\deg (X\cap\PP^{-k})^\vee,$$
where $\PP^{-k}$ means a general codimension $k$ linear space. 

Applying this to $X=OG(5,10)_+$ and $k=3$, we get a formula that we can reduce to an intersection 
product on $G(5,10)$. The result turns our to be zero, which exactly means that the projective dual of 
$OG(5,10)_+\times\PP^3$ is not a hypersurface.\qed

\section{Lagrangian Grassmannians, spinor varieties,  and perspectives} 
\label{sec_spin_lagr_varieties}

As observed in \cite{lascoux}, the dual degree of a Grassmannian can in principle be computed 
with the help of Schubert calculus (see also \cite{fnr} for a more general approach based on equivariant
Schubert calculus). This observation also applies to spinor varieties and Lagrangian Grassmannians $LG(n,2n)$ 
up to some minor modifications. 

\subsection{Lagrangian Grassmannians} 
Let us consider the Lagrangian Grassmannian $LG(n,2n)$ parametrizing Lagrangian subspaces in $\CC^{2n}$ endowed 
with some symplectic form. Recall that the cotangent 
bundle of this variety is just $S^2E$, where $E$ denotes the tautological rank $n$ vector bundle. Moreover,
as a subvariety of the usual Grassmannian $G(n,2n)$, the Lagrangian Grassmannian is defined by a general section 
of $\wedge^2E^*$ (we use the same notation for the tautological bundle on the Grassmannian and its restriction
to $LG(n,2n)$. Its fundamental class in the Chow ring of $G(n,2n)$ is therefore given by the Thom-Porteous 
formula, namely 
$$[LG(n,2n)]=c_{top}(\wedge^2E^*)=\sigma_{\delta(n)},$$
the Schubert cycle defined by the partition $\delta(n)=(n-1,\ldots , 2,1,0)$. Applying the Katz-Kleiman 
formula we get 
$$\deg (LG(n,2n)^\vee)=\int_{LG(n,2n)}\frac{c(S^2E)}{(1-\lambda)^2}=\int_{G(n,2n)}\frac{c(S^2E)\sigma_{\delta(n)}}{(1-\lambda)^2}.$$
This reduces the computation to a question of Schubert calculus, as announced. There are several difficulties. First we need to 
be able to decompose the total Chern class of the symmetric square of a vector bundle in terms of its Schur classes; in other
words, we need to compute (some of) the coefficients in the universal expansion 
$$c(S^2E)=\sum_{\ell(\mu)\le n}a_\mu s_\mu(E).$$
This problem was essentially solved in \cite{llt}. Plugging this into our previous formulas we get 
$$\deg (LG(n,2n)^\vee)=\sum_{k\ge 0}(k+1)
\sum_{\ell(\mu)\le n}(-1)^{|\mu|}a_\mu \int_{G(n,2n)}\lambda^k\sigma_\mu\sigma_{\delta(n)}.$$
In order to compute this integral, we need to decompose 
$$\lambda^k\sigma_\mu=\sum_{|\nu|=|\mu|+k}K_{\mu,\nu}\sigma_\nu,$$
where $K_{\mu,\nu}$ is the usual Kostka-Foulkes coefficient, and observe that by the duality properties
of the Schubert cycles, $\nu$ gives a non zero contribution only when it coincides with the complement of 
$\delta(n)$ in the $n\times n$ square, that is $\delta(n+1)$. Hence the expression:

\begin{prop}
$$\deg (LG(n,2n)^\vee)=
\sum_{\ell(\mu)\le n}(-1)^{|\mu|}\Bigg(\binom{n+1}{2}-|\mu|+1\Bigg)a_\mu K_{\mu,\delta(n+1)}.$$
\end{prop}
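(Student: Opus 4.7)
My plan is to follow precisely the strategy sketched just before the statement, combining the Katz-Kleiman formula with Schubert calculus on $G(n,2n)$, using the Thom-Porteous class of $LG(n,2n)$ to push everything up to the ambient Grassmannian, and then using Poincaré duality to collapse the sum to a single surviving Schubert cycle.

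First I would apply the Katz-Kleiman formula with $\Omega_{LG(n,2n)}=S^2E$, and use the identity $[LG(n,2n)]=\sigma_{\delta(n)}$ in $H^*(G(n,2n))$ to rewrite
$$\deg(LG(n,2n)^\vee)=\int_{G(n,2n)}\frac{c(S^2E)\,\sigma_{\delta(n)}}{(1-\lambda)^2}.$$
Next I would expand $(1-\lambda)^{-2}=\sum_{k\ge 0}(k+1)\lambda^k$ and plug in the Schur expansion $c(S^2E)=\sum_\mu a_\mu s_\mu(E)$ from \cite{llt}. Converting $s_\mu(E)$ into the Schubert class $\sigma_\mu$ (which is naturally built from the quotient bundle $Q$ rather than $E$) produces the factor $(-1)^{|\mu|}$, yielding the intermediate formula displayed in the excerpt.

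The core of the argument is then a Poincaré duality computation on $G(n,2n)$. Iterating the Pieri rule gives $\lambda^k\sigma_\mu=\sum_{|\nu|=|\mu|+k}K_{\mu,\nu}\sigma_\nu$, and the pairing $\int_{G(n,2n)}\sigma_\nu\,\sigma_{\delta(n)}$ is non-zero precisely when $\nu$ is the complement of $\delta(n)=(n-1,n-2,\ldots,1,0)$ in the $n\times n$ rectangle, that is, $\nu=(n,n-1,\ldots,1)$, which is $\delta(n+1)$ read as a partition with at most $n$ parts. The degree condition $|\nu|=\binom{n+1}{2}$ then forces $k=\binom{n+1}{2}-|\mu|$, so $k+1=\binom{n+1}{2}-|\mu|+1$; collecting contributions and noting that only $\mu$ with $\ell(\mu)\le n$ and $|\mu|\le\binom{n+1}{2}$ survive yields the claimed formula.

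The main obstacle I expect is the sign bookkeeping in the change-of-basis step: one has to carefully reconcile the expansion of $c(S^2E)$ in Schur classes of $E$ with the standard convention $\sigma_\mu=s_\mu(Q)$ on $G(n,2n)$, making sure that the duality sign $(-1)^{|\mu|}$ combines cleanly with the coefficients $a_\mu$ from \cite{llt} and does not interact awkwardly with possible conjugation of partitions. Once these conventions are fixed, every remaining step is formal Schubert calculus.
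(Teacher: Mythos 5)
Your proposal is correct and follows essentially the same route as the paper: Katz--Kleiman with $\Omega_{LG(n,2n)}=S^2E$, the Thom--Porteous identity $[LG(n,2n)]=\sigma_{\delta(n)}$ to push the integral to $G(n,2n)$, the Schur expansion $c(S^2E)=\sum_\mu a_\mu s_\mu(E)$ from \cite{llt}, and then the Pieri/Kostka expansion together with Poincar\'e duality forcing $\nu=\delta(n+1)$ and $k+1=\binom{n+1}{2}-|\mu|+1$. No gaps to report.
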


The classical properties of Kostka-Foulkes coefficients allow to restrict this sum to partitions $\mu\subset \delta(n+1)$, 
but this is still a large sum, with alternating signs. 

\medskip\noindent {\it Example}. For $n=3$ the answer is  already known: the projective dual of $LG(3,6)$ is a quartic
(so that $LG(3,6)$ belongs to the conjecturally short list of smooth varieties of dual degree four). For $n=4$
the previous expression is over $42$ partitions. The result is as expected,
$$\deg (LG(4,8)^\vee)=72.$$

\subsection{Spinor varieties}
The case of the spinor variety $\SS_{2n}$, parametrizing one of the families of maximal isotropic subspaces 
in $\CC^{2n}$, now endowed with a non-degenerate quadratic form, is very similar. The cotangent 
bundle of this variety is now $\wedge^2E$, where $E$ denotes again the tautological rank $n$ vector bundle. Moreover,
as a subvariety of the usual Grassmannian $G(n,2n)$, the Lagrangian Grassmannian is defined by a general section 
of $S^2E^*$, so its fundamental class in the Chow ring of $G(n,2n)$ is 
$$[\SS_{2n}]=c_{top}(S^2E^*)=2^n\sigma_{\delta(n+1)}.$$
In order to apply the Katz-Kleiman formula we need to recall that the Pl\"ucker line bundle is twice the generator
of the Picard group. We get 
$$\deg (\SS_{2n}^\vee)=\int_{\SS_{2n}}\frac{c(\wedge^2E)}{(1-\lambda/2)^2}=2^n\int_{G(n,2n)}\frac{c(\wedge^2E)\sigma_{\delta(n+1)}}{(1-\lambda/2)^2}.$$
Then we need to be able to decompose the total Chern class of the skew-symmetric square of a vector bundle in terms of its 
Schur classes; again there is a universal expansion 
$$c(\wedge^2E)=\sum_{\ell(\mu)\le n}b_\mu s_\mu(E),$$
also discussed in  \cite{llt}.
Plugging this into our previous formulas we get 
$$\deg (\SS_{2n}^\vee)=\sum_{k\ge 0}(k+1)
\sum_{\ell(\mu)\le n}(-1)^{|\mu|}b_\mu \int_{G(n,2n)}2^{n-k}\lambda^k\sigma_\mu\sigma_{\delta(n+1)}.$$
This finally yields the expression:

\begin{prop}
$$\deg (\SS_{2n}^\vee)= 2^{-\frac{n(n-3)}{2}}
\sum_{\ell(\mu)\le n}(-1)^{|\mu|}\Bigg(\binom{n}{2}-|\mu|+1\Bigg)2^{|\mu|}b_\mu K_{\mu,\delta(n)}.$$
\end{prop}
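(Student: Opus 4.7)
The plan is to mirror the Lagrangian case step by step, being careful with the half-spinor Plücker normalization. First I would apply the Katz-Kleiman formula, using that the cotangent bundle of $\SS_{2n}$ is $\wedge^2 E$ (where $E$ is the restriction of the tautological subbundle of $G(n,2n)$) and that the hyperplane class $H$ of the spinor embedding satisfies $\lambda|_{\SS_{2n}} = 2H$, since the Plücker line bundle on $G(n,2n)$ restricts to twice the generator of $\mathrm{Pic}(\SS_{2n})$. This gives
$$\deg(\SS_{2n}^\vee) = \int_{\SS_{2n}} \frac{c(\wedge^2 E)}{(1-\lambda/2)^2}.$$

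Next I would push the integral forward to $G(n,2n)$ via $[\SS_{2n}] = 2^n \sigma_{\delta(n+1)}$, which comes from the Thom-Porteous formula applied to the general section of $S^2 E^*$ cutting out $\SS_{2n}$ (the factor $2^n$ arising because $c_{top}(S^2 E^*) = 2^n c_{top}(E^*) \cdot (\text{lower})$, or more precisely, from the standard Chern class computation). Substituting the universal expansion $c(\wedge^2 E) = \sum_\mu b_\mu s_\mu(E)$ from \cite{llt} and expanding $(1-\lambda/2)^{-2} = \sum_{k\ge 0}(k+1) 2^{-k}\lambda^k$, I obtain
$$\deg(\SS_{2n}^\vee) = 2^n \sum_{k\ge 0}(k+1)\,2^{-k}\!\!\sum_{\ell(\mu)\le n}(-1)^{|\mu|} b_\mu \int_{G(n,2n)} \lambda^k \sigma_\mu \sigma_{\delta(n+1)},$$
the sign $(-1)^{|\mu|}$ coming from the passage between Schur polynomials in Chern roots of $E$ and the Schubert class convention.

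Finally I would carry out the Schubert calculus exactly as in the Lagrangian case: expand $\lambda^k \sigma_\mu = \sum_\nu K_{\mu,\nu}\sigma_\nu$ using Kostka-Foulkes coefficients, and invoke Poincaré duality on $G(n,2n)$, which selects only the partition $\nu$ complementary to $\delta(n+1)$ inside the $n\times n$ square, namely $\nu = \delta(n)$. The dimension constraint $k + |\mu| + |\delta(n+1)| = n^2$ forces $k = \binom{n}{2} - |\mu|$, so $k+1 = \binom{n}{2} - |\mu| + 1$. Collecting powers of $2$,
$$2^n \cdot 2^{-k} = 2^{\,n - \binom{n}{2} + |\mu|} = 2^{-n(n-3)/2}\cdot 2^{|\mu|},$$
which assembles into the stated closed form.

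The main obstacle is purely bookkeeping: the factor of two from the half-spinor embedding propagates through every step, and one must be confident that the exponent $-n(n-3)/2$ (which is \emph{negative} for $n\ge 4$) is correct despite giving a fractional-looking prefactor. Conceptually, though, the argument is mechanical once one knows the class $[\SS_{2n}] = 2^n\sigma_{\delta(n+1)}$; as with the Lagrangian Grassmannian formula, the real difficulty is not proving the identity but extracting useful information from a large alternating sum over partitions $\mu \subset \delta(n)$.
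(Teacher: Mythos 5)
Your proposal is essentially identical to the paper's own derivation: Katz--Kleiman with hyperplane class $\lambda/2$, pushforward to $G(n,2n)$ via $[\SS_{2n}]=c_{\mathrm{top}}(S^2E^*)=2^n\sigma_{\delta(n+1)}$, the expansion $c(\wedge^2E)=\sum_\mu b_\mu s_\mu(E)$ from Laksov--Lascoux--Thorup, the Pieri/Kostka expansion of $\lambda^k\sigma_\mu$ with Poincar\'e duality selecting $\nu=\delta(n)$, and the same power-of-two bookkeeping $2^{n-k}=2^{-n(n-3)/2}\,2^{|\mu|}$ with $k=\binom{n}{2}-|\mu|$. The only ingredient you take on faith, the identification of $[\SS_{2n}]$ with $c_{\mathrm{top}}(S^2E^*)$, is exactly what the paper itself asserts, so your write-up matches the paper's proof in both structure and level of detail.
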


\subsection{Perspectives}

In this paper we were able to extend Tevelev's formula to graded Lie algebras with prime grading and maximal $\theta$-$\rank$. Even though we have showed that the hypothesis we have made are somehow necessary (see Section \ref{sec_counterexamples}), one may still wonder whether a suitable version of Theorem \ref{thm_main} may hold in greater generality. 

Indeed, the proof of Theorem \ref{thm_main} is clearly divided in two parts, the first being the proof of the inclusion in the divisor, and the second being the dimension bound. We believe that the former could hold in a more general setting, for instance when the hypothesis of maximal $\theta$-$\rank$ is dropped. However, in order to deal with such a situation, one should be able to understand more 
deeply the structure of graded Lie algebras; for instance, it would be necessary to deduce a homogeneous decomposition of the algebra which resembles the one we have constructed in this paper (e.g. a decomposition in which a $\theta$-torus appears as a factor of an explicit Cartan subalgebra, so that the complex reflections in the little Weyl group can be made explicit). 

This generalization, together with the research for a general explicit formula for the dual of very classical homogeneous varieties (such as orthogonal and symplectic Grassmannians), will be the subject of future work.

\bibliographystyle{alpha}

\begin{thebibliography}{}

\end{thebibliography}


\begin{thebibliography}{Aa}







\bibitem[BCM18]{bcm}
Brou\'e M.,  Corran R.,  Michel J.,  {\it 
Cyclotomic root systems and bad primes}, Adv. Math. {\bf 325} (2018), 375-458. 




\bibitem[DCW97]{cw}
De Concini C., Weyman J., {\it 
A formula with nonnegative terms for the degree of the dual variety of a homogeneous space},
Proc. Amer. Math. Soc. {\bf 125} (1997), no. 1, 1–8. 




\bibitem[FNR08]{fnr}
Feh\'er L.M.,  N\'emethi A.,  Rim\'anyi R., {\it 
The degree of the discriminant of irreducible representations},
J. Algebraic Geom. {\bf 17} (2008), no. 4, 751–780.



\bibitem[GK93]{GK}
Gelfand I.M., Kapranov M.M., {\it 
On the Dimension and Degree of the Projective Dual Variety: A q-Analog of the Katz-Kleiman Formula},
in The Gelfand Mathematical Seminars, 1990–1992, 27–33, Birkh\"auser 1993.

\bibitem[GKZ94]{GKZ}
Gelfand I.M., Kapranov M.M., Zelevinsky A.V., Discriminants, resultants, and multidimensional determinants, in Mathematics: Theory and 
Applications, Birkhäuser  1994. 


\bibitem[GLRY12]{GRLY}
Gross B., Levy P., Reeder M.,  Yu J.-K., {\it 
Gradings of positive rank on simple Lie algebras},
Transform. Groups {\bf 17} (2012), no. 4, 1123–1190. 

\bibitem[He01]{helgason}
Helgason S., {\it Differential Geometry, Lie Groups and Symmetric Spaces}, Amer. Math. Soc. (2001), 622–650, Academic Press, New York, 1978. Russian translation, Factorial Press, 2005.


\bibitem[HO22]{HO}
Holweck F.,  Oeding L., {\it Hyperdeterminants from the $E_8$ discriminant}, J. Algebra {\bf 593} (2022), 622–650.


\bibitem[Ka69]{kac}
Kac V.G., {\it 
Automorphisms of finite order of semisimple Lie algebras},
Funkcional. Anal. i Priložen. {\bf 3} (1969), 94-96.

\bibitem[KM87]{KM}
Knop F., Menzel G., {\it
Duale Variet\"aten von Fahnenvariet\"aten},
Comment. Math. Helv. {\bf 62} (1987), no. 1, 38–61. 



\bibitem[Ku07]{kuz}
Kuznetsov A.., {\it
Homological projective duality},
Publ. Math. Inst. Hautes Études Sci. {\bf 105} (2007), 157–220. 





\bibitem[La81]{lascoux}
Lascoux A., {\it Degree of the dual of a Grassmann variety},
Comm. Algebra {\bf 9} (1981), 1215--1225.


\bibitem[LLT89]{llt}
Laksov D.,Lascoux A., Thorup A., {\it 
On Giambelli's theorem on complete correlations},
Acta Math. {\bf 162} (1989), no. 3-4, 143–199. 



 











\bibitem[ST54]{ShephardTodd}
Shephard G.C., Todd, J.A., {\it 
Finite unitary reflection groups},
Canad. J. Math. {\bf 6} (1954), 274–304. 


\bibitem[Te05]{tevelev}
Tevelev E.A., 
Projective duality and homogeneous spaces, in 
Encyclopaedia of Mathematical Sciences {\bf 133}, Invariant Theory and Algebraic Transformation Groups IV,  Springer 2005. 

\bibitem[V76]{vinberg}
Vinberg E.A., {\it The Weyl group of a graded Lie algebra}, 
Math. USSR Izv. {\bf 10} (1976),  463--495.


\bibitem[VE78]{vinbergelash}
Vinberg E.A., Elashvili A.G., {\it 
A classification of the three-vectors of nine-dimensional space},
Trudy Sem. Vektor. Tenzor. Anal. {\bf 18} (1978), 197–233. 


\end{thebibliography}

\bigskip

\bigskip
\noindent
\textsc{Institut de Mathématiques de Toulouse, UMR 5219,  Universit\'e Paul Sabatier, F-31062 Toulouse Cedex 9, France}

\noindent
\textit{Email address}: \texttt{manivel@math.cnrs.fr}

\medskip
\noindent
\textsc{Institut de Mathématiques de Bourgogne, UMR CNRS 5584, Universit\'e de Bourgogne et Franche-Comt\'e, 9 Avenue Alain Savary, BP 47870, 21078 Dijon Cedex, France}

\noindent
\textit{Email address}: \texttt{Vladimiro.Benedetti@u-bourgogne.fr  }

\end{document}